\documentclass{article}
\usepackage{graphicx}
\usepackage{amsfonts}
\usepackage{epstopdf}
\usepackage{amsmath}
\usepackage{amsthm}
\usepackage{xcolor}
\usepackage{epstopdf}
 \usepackage{graphics}
\usepackage{color}
\usepackage{amssymb}


\newcommand{\diag}{\operatorname{diag}}
\newcommand{\const}{\operatorname{const}}

\newcommand{\Real}{\mathbb{R}}
\newcommand{\Comp}{\mathbb{C}}

\newcommand{\eps}{\varepsilon}

\newcommand{\set}[1]{\left\{#1\right\}}
\newcommand{\seq}[1]{\left<#1\right>}
\newcommand{\norm}[1]{\left\Vert#1\right\Vert}  

\usepackage{color}

\newtheorem{theorem}{Theorem}
\newtheorem{corollary}[theorem]{Corollary}
\newtheorem{lemma}[theorem]{Lemma} 
\newtheorem{proposition}[theorem]{Proposition}

\theoremstyle{definition}
\newtheorem{remark}[theorem]{Remark}

\title{Matrix  methods for Pad\'e approximation: numerical calculation of  poles, zeros and residues.}

\author{Luca Perotti\footnotemark[1]~ and Micha\l{} Wojtylak\footnotemark[2]~\footnotemark[3]}

\begin{document}

\date{\today}
\maketitle
\begin{abstract}

A  representation of the Pad\'e approximation of the $Z$-transform of a signal as a resolvent of a tridiagonal matrix $J_n$ is given.  Several  formulas for the poles, zeros and residues of the Pad\'e approximation in terms of the matrix $J_n$ are proposed. Their numerical stability is tested and compared. Methods for computing forward and backward errors are presented. 
\end{abstract}

{\bf Keywords.} Pad\'e approximation, nonsymmetric tridiagonal matrix, poles, residues, forward error, backward error, white noise
\noindent

{\bf AMS subject classification.}
15A18, 47B36, 15B05, 15B52

\renewcommand{\thefootnote}{\fnsymbol{footnote}}
\footnotetext[1]{Department of Physics, Texas Southern University, Houston, Texas 77004 USA \texttt{perottil@tsu.edu}}
\footnotetext[2]{Instytut Matematyki, Wydzia\l{} Matematyki i Informatyki,
Uniwersytet Jagiello\'nski, Krak\'ow, ul. \L ojasiewicza 6, 30-348 Krak\'ow, Poland
   \texttt{michal.wojtylak@uj.edu.pl}.}
\footnotetext[3]{   Supported by the Alexander von Humboldt Foundation with
  a  by a Return Home Scholarschip.}

\renewcommand{\thefootnote}{\arabic{footnote}}


\section{Introduction}

When trying to extract the information part from a discrete finite data sequence $s=s_0,s_1,\dots,s_{(2n-1)} $ ($s_i\in\Comp$) where the signal is immersed in uncorrelated or weakly correlated additive noise, it is sometimes convenient to consider its $Z$--transform, which is a function of a complex variable defined by the series 
\begin{equation}\label{oscilform1}
Z(w)=\sum_{k=0}^\infty s_k w^{-k},
\end{equation}
convergent at least outside the unit circle. 

As the exact calculation of $Z(w)$ is impossible due to the finiteness of the  experimental sequence, one considers a Pad\'e approximant of $Z(w)$, usually --for reasons that we shall present in Section \ref{choice}-- the $[n-1/n]$ one, i.e. a function of the form ${P_n(w)}/{Q_n(w)}$,
where $P_n(w)$ and $Q_n(w)$ are polynomials of degrees respectively at most $n-1$ and $n$, without common zeros and such that
$$
Z(w)- \frac{P_n(w)}{Q_n(w)}=\mathcal{O}(w^{-n-1}),\quad w\to\infty.
$$

As our starting point is the technique presented in  \cite{bessis2009universal}, we shall start with a brief review of its salient points. 
Consider a discrete finite data series $s=s_0,s_1,\dots,s_{(2n-1)} $ ($s_i\in\Comp$) where each term is the sum of a signal part and a noise part: $s_i=s_i^{\text{info}}+s_i^{\text{noise}}$.
The task is to find the signal part, knowing only $s$ and  making minimal theoretical assumptions on both $s^{\text{info}}$ and $s^{\text{noise}}$.
We assume that the signal part is a finite sum of damped oscillations
\begin{equation}\label{oscilform}
 s_k^{\text{info}}=\sum_{p=1}^{P}A_p\mathrm{e}^{\mathrm{i}\gamma_p k},\quad k=0,1,2,\dots,\quad \gamma_p=\frac{\omega_p T}{N}
 \end{equation}
 where $P$ is the number of oscillations, $N=2n$ is the length of the data sequence, $T$ is the time interval,  $A_p$ is the amplitude,
 $\omega_p=2\pi f_p+i b_p$,
  $f_p$ is the frequency, $ b_p>0$ is the damping factor for $p=1,2,\dots P.$
 The noise is usually a complex uncorrelated (white, uniform or Gaussian) or lightly correlated ($1/f$ pink or ARMA) stationary noise \cite{bessis2009universal}, though other kinds of noise can be considered, as for example noise with a Cauchy distribution \cite{bessis2013noise}.  

The method described in \cite{bessis2009universal} is based on the following matrix representation 
$$
\frac{P_n(w)}{Q_n(w)}=\seq{(wI_{n} -  J_{n})^{-1} \mathbf{e}_0,\mathbf{e}_0}.
$$
Here $J_{n}$ is a special $n\times n$ tridiagonal matrix, constructed recursively from the data sequence,  $\mathbf{e}_0$ is the first vector of the canonical basis and $I_{n}$ is the identity matrix.  In Section \ref{matrixform} we will give a detailed construction of the matrix $J_{n}$. Consequently, the eigenvalues of $J_n$ are the zeros of $Q_n(w)$, and the eigenvalues of the matrix $J_n$ with the first row and first column removed are the zeros of $P_n(w)$.

%

\textit{In this way, the problem of signal decomposition is transformed into a problem of describing and computing the spectrum of a large, tridiagonal matrix.}

We note here that, should the series (\ref{oscilform1}) be truncated at $k=2n$ instead than at $k=2n-1$, the Pad\'e approximant would be of the $[n/n]$ type. Its construction is very similar to that of the $[n-1/n]$ one; when needed, we shall --throughout the paper-- point out the differences.

Our paper is related also to the theory of Hankel matrices and pencils, especially \cite{boley1998vandermonde,bryc2006spectral,tyrtyshnikov1994bad}. To clarify it, we remark that the polynomial $Q_n(w)$ is given by the determinant of a Hankel pencil:
$$
Q_n(w)=\det(U_0 - w U_1), 
$$
where $U_0,U_1\in\Comp^{n+1}$ are Hankel matrices defined by
$$
U_0[i,j] = s_{i+j+1},\quad U_1[i,j]=s_{i+j},\quad i,j=0,\dots, n,
$$
see \cite{barone2008new}. 
Hence, the matrix $J_n$ is a companion matrix of the Hankel pencil $U_0 - z U_1$, in the sense that if $U_0 - z U_1$ has distinct simple eigenvalues, then the spectrum of $J_n$  coincides with the spectrum of $U_0 - z U_1$, see Corollary \ref{cor:linalg} for a precise statement. Although this fact pertains to pure linear algebra, its proof relies heavily on the Pad\'e approximation theory.  

 Our paper is organized as follows: Section \ref{choice}, meant for a general readership, outlines the physical and data analysis motivations for our research; Section \ref{gen} gives a brief introduction to Pad\'{e} approximants; in Section \ref{pz} we derive the algorithms for the construction of the $J_n$ matrices for computing poles and zeros of the diagonal and subdiagonal Pad\'{e} approximants to the Z-function; Section \ref{residues} presents several formulas for the residues of the poles; finally,  Section \ref{num} compares the numerical results of the different methods presented in the previous two sections.
\section{Motivations}\label{choice}

\subsection{Comparison with FFT}

 While the method we outlined is more time consuming than the standard Fast Fourier Transform (FFT), it has some advantages that warrant its use:

\begin{itemize}
\item[a.] The structure of the sequence $s$ is encoded in the zeros and poles of ${P_n(w)}/{Q_n(w)}$ in a way that permits to distinguish signal from noise poles, even when the noise is high. For information of the form \eqref{oscilform} we have:

\medskip

\begin{itemize}
\setlength{\itemsep}{1pt}
  \setlength{\parskip}{5pt}
  \setlength{\parsep}{0pt}
\item[(i)] There are $P<n$ poles of ${P_n(w)}/{Q_n(w)}$, each of them
corresponding to a signal frequency whose amplitude can be constant or varying exponentially. The knowledge of the poles $z_1,\dots z_P$ and their residues completely determines all the signal parameters. These poles are stable: they do not move significantly when the length of the data sequence is changed (only a  slight movement is caused by contamination from the noise poles due to the nonlinearity of the Pad\'e approximant).
\item[(ii)] The remaining $n-P$ poles correspond to the noise and converge with $n\to\infty$ to the roots of unity. These poles are not stable: they move when the length of the data sequence is changed. Furthermore, each of these zeros is located close to a zero of $P_n(w)$, forming a so-called ``Froissart-doublet" \cite{froissart1969approximation}. 
\end{itemize}

\medskip
\item[b.] Provided the signal is not too week and the sequence is long enough (see below), signal poles repel both noise poles and zeros and therefore stand out among the other poles \cite{perotti2012IEEE}. 
\item[c.] The zeros of $Q_n(w)$ are not bound to a lattice on the unitary circle as is the case for the FFT, but are free to move in the complex plane. This has two positive consequences: 

\medskip

\begin{itemize}
\setlength{\itemsep}{1pt}
  \setlength{\parskip}{5pt}
  \setlength{\parsep}{0pt}
\item[(i)] The so called ``super-resolution": the Nyquist-Shannon sampling theorem \cite{sha} --that limits the frequency resolution to $1/T$, where $T$ is the total sampling time-- is verified only in average, but not locally, as poles can cluster where the density of signals is high. Reductions of a factor $100$ with respect to FFT of the number of data points required to resolve signals with similar frequencies have been observed \cite{barone1996super,barone1998some,perotti2013enhanced}.
\item[(ii)]  For FFT damped signals appear as peaks whose half--width equals the decay factor.  Hence,  weak signals next to strong damped ones can be covered by the wide peak of the FFT of the strong signal; also, nearby damped signals may not be resolved as they appear as a single wide peak. The current method 
avoids these problems, as damped signals appear as poles off the unit circle.
\end{itemize}

\medskip

\end{itemize}

\subsection{Applications}

Pad\'{e} analysis has found application in several areas, particularly when the system oscillations are damped (because of point c.(ii) above); or when they are chirped, or have a complicated time dependence. In these latter cases the signal is usually analyzed by looking at the signal frequencies in sliding ``time windows" and plotting the frequencies found in each window vs. the window starting time; ``super-resolution" (point c.(i) above) allows the use of shorter ``windows" than FFT and to thus follow the changes in time of the system frequencies avoiding the often complicated sidebands that appear when using the longer ``windows" required to obtain a sufficient resolution with FFT. 

 For applications involving damped signals, we can mention magnetic resonance spectroscopy \cite{belkic2006vivo}; for damped chirped signals in high noise, gravitational wave bursts \cite{perotti2014bursts}; and for signals with frequencies oscillating in a complicated way, syncronized neuronal hyppocampal rythms.
 
Other possible fields of application due to the presence of damped signals include financial modeling \cite{junior2011shocks,wu2012damped} and acoustic localization of underground oil fields. Applications where strong noise is present (and therefore properties a. and b. are useful) include measuring-while-drilling data transmission in oil industry, early detection of structural faults in mechanical systems such as helicopter shafts, and detection of water distribution pipe leaks \cite{azen08,leaks}. 

\subsection{The context}

Several other methods of calculating poles and residues of Pad\'e approximations are known, starting from the Jacobi one \cite{jacobi1846} (which we shall use in Theorem \ref{simplezeros}) to the Hankel matrices linear pencil one \cite{barone2008new,barone2010}. However, we do not know of any other direct way in the literature of calculating the zeros, the usual procedure being to find $P(w)$ by taking the first $n$ terms of the product $Z(w)Q(w)$, which can be numerically unstable. The method presented in Section \ref{pz} is based on matrix calculations and has also theoretical consequences: see e.g. Theorem \ref{pert} below.


While analytic continuation of the $Z$--transform inside the unit circle is a trivial matter in absence of noise, noise creates a natural boundary on the unit circle itself \cite{steinhaus1930wahrscheinlichkeit}. The boundary is usually assumed to be of the Carleman class \cite{carleman1926fonctions}, but a proof is still missing \cite{bessis2013noise}.
 M. Froissart \cite{froissart1969approximation} has shown that, in the general noisy case, the natural boundary generated by the noise \cite{steinhaus1930wahrscheinlichkeit} is approximated by doublets of poles and zeros of the Pad\'e approximants (Froissart doublets) surrounding the vicinity of the unit circle, with the mean distance of the members of each doublet proportional to the noise magnitude. This phenomenon was later discussed in the context of noise filtering in \cite{Bessis1996Pade,fournier1993universal,fournier1995statistical}. While the relationship between the elongation $|z_j^n - \lambda_j^n|$ of a Froissart doublet and the residue of the corresponding noise pole is clear, the rate of convergence of the elongation of the Froissart doublets to zero is still unknown.

This fact, although a formal argument is missing, suggest that  for the a noisy signal case the $[n/n]$ approximant should be optimal.
However, the $[n-1/n]$ approximant is much easier in computation and allows more comfortable theoretical background, as will be seen in the course of the manuscript.

 A further reason for the present study is the ongoing exploration of the statistical properties of noise poles, zeros, and residues of the Pad\'e approximant of $Z(w)$ \cite{barone2013universality,bessis2009universal,bessis2013noise}, whose aim is to better distinguish noise and signal so as to improve the current denoising methods. The maximum data sequence length we were able to analyze on a P.C. before encountering memory problems was $4\times 10^5$. The results of this study will be presented in a forthcoming paper.

\section{General Pad\'e approximation theory}\label{gen}

 We shall  briefly discuss in this section the well known relation between the Pad\'e approximation and continuous fractions in the special case of interest. Our aim is to review the existing theory in a way which is tailored for subsequent application. This will be done in two theorems, whose proofs can be found e.g. in \cite{Wall}.

Consider a power series
\begin{equation}
G(z) = \sum_{j=0}^{\infty}s_jz^j, \label{szere}
\end{equation}
convergent in some disc $D\left(0,r\right)$, $r>0$. By its 
 $[L/M]$ \textit{ Pad\'e approximation } we understand a rational function 
 $$
 G_{[L/M]}(z)=\frac{P(z)}{Q(z)},
 $$
 where $P(z)$ and $Q(z)$ are polynomials without common zeros, of degrees at most  $L,M$ respectively and such that  $Q(0)\neq 0$ and
 \begin{equation}
 G(z)- \frac{P(z)}{Q(z)} = O(z^{L+M+1})  \qquad (z\to 0). \label{duzeo}
 \end{equation}
   Further, for a continuous fraction 
\begin{equation} \label{S}
\cfrac{ a_0}{ b_1+\cfrac{z a_1}{ b_2+\cfrac{z a_3}{ b_3+{\atop{\ddots}}}}}, \quad a_i,b_i\in\Comp\setminus\set0
\end{equation}
we denote by $S_k(z)$ the $k$-th partial sum of the fraction, and let 
$$
S_k(z)=\frac{A_k(z)}{B_k(z)}
$$
where the polynomials $A_k(z)$ and $B_k(z)$ do not have common factors. The following theorem lists the basic properties of the polynomials $A_k$ and $B_k$. 

\begin{theorem}\label{generalPade1} Let the polynomials $A_k$ and $B_k$ be as above, then the following statements hold.
\begin{itemize}
\item[(i)] The polynomials $A_k$ and $B_k$ satisfy the following  Frobenius recurrence relations
\begin{equation} \label{14}
 \begin{gathered} 
   zA_{-2}(z)=1, \qquad A_{-1}(z)=0,\\
 A_{k+2}(z)= b_{k+3}A_{k+1}(z)+z a_{k+2}A_k(z),\qquad k\geq -2 ,
\end{gathered}
 \end{equation} 
 and
 \begin{equation}\label{15}
 \begin{gathered}
    B_{-2}(z)=0, \qquad B_{-1}(z)=1,\\
  B_{k+2}(z)= b_{k+3}B_{k+1}(z)+z a_{k+2}B_{k}(z), \qquad k\geq -2 , 
  \end{gathered}
   \end{equation}
\item[(ii)] The polynomial $A_k(z)$ is of degree $\lfloor\frac{k}{2}\rfloor$ and the polynomial $B_k$ is of degree $\lfloor\frac{k+1}{2}\rfloor$ for $k\geq 0 $.
\item[(iii)] The polynomials $A_k(z)$ and $B_k(z)$ have no common zeros for $k\geq 0$.

\item[(iv)] $B_k(0)\neq 0$ for $k\geq 0 $.

\item[(v)] In the expansion of the partial sum in a Taylor series 
$$
\frac{A_k(z)}{B_k(z)}=\sum_{i=0}^{k-1}s_iz^i+\mathcal{O}(z^k)\quad  \quad (z\to 0),\quad k\geq 0
$$
(which is possible due to (iv))
 the coefficients $s_i$ $(i\geq 0)$ do not depend on $k$.
\item[(vi)] If the power series 
$$
\sum_{i=0}^\infty s_iz^i 
$$
converges in some neighborhood of the origin, then $A_{2n-1}(z)/B_{2n-1}(z)$ is its $[n-1/n]$ Pad\'e approximation and $A_{2n}(z)/B_{2n}(z)$ is its $[n/n]$ Pad\'e approximation.
\end{itemize}
\end{theorem}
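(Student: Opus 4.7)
The plan is to follow the order (i) $\to$ (ii) $\to$ (iv) $\to$ (iii) $\to$ (v) $\to$ (vi), with the pivot being the determinantal identity
\begin{equation*}
A_k(z)B_{k-1}(z) - A_{k-1}(z)B_k(z) = (-z)^k\, a_0 a_1 \cdots a_k, \qquad k\geq 0,
\end{equation*}
from which (iii) and (v) fall out cheaply once (i), (ii), (iv) are in place.

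I would begin with (i) by defining auxiliary polynomials $\widetilde A_k, \widetilde B_k$ through the stated Frobenius recurrences with the prescribed initial conditions and verifying by induction that $S_k(z) = \widetilde A_k(z)/\widetilde B_k(z)$; the induction step is just absorbing one more level of the continued fraction from the bottom, i.e., the algebraic identity rewriting $(b + z a / x)$ into a single fraction. Identifying $\widetilde A_k, \widetilde B_k$ with the reduced $A_k, B_k$ is then justified a posteriori by (iii). Part (ii) is a direct induction on the recurrences: the two summands $b_{k+3}A_{k+1}$ and $z a_{k+2}A_k$ have degrees $\lfloor(k+1)/2\rfloor$ and $\lfloor k/2\rfloor + 1$, and parity considerations show that exactly one of them contributes the top degree at each step; the argument for $B_k$ is identical. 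Part (iv) is immediate from evaluating the $B$-recurrence at $z=0$, which collapses to $B_{k+2}(0)=b_{k+3}B_{k+1}(0)$, and iterating from $B_{-1}(0)=1$, $B_0(0)=b_1$.

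The determinantal identity itself I would establish by induction: inserting (i) into $D_k = A_kB_{k-1} - A_{k-1}B_k$ and cancelling the $b_{k+1}$-terms gives $D_k = -z a_k D_{k-1}$, with base $D_0 = a_0$. Part (iii) is then one line: a common zero $z_0$ of $A_k$ and $B_k$ would force $(-z_0)^k \prod_{j=0}^k a_j = 0$, so $z_0 = 0$, contradicting (iv) since all $a_j\neq 0$. For (v), dividing the identity by $B_k B_{k-1}$ yields
\begin{equation*}
\frac{A_k(z)}{B_k(z)} - \frac{A_{k-1}(z)}{B_{k-1}(z)} = \frac{(-z)^k\,a_0\cdots a_k}{B_k(z)B_{k-1}(z)} = \mathcal{O}(z^k),
\end{equation*}
using $B_k(0), B_{k-1}(0)\neq 0$. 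Telescoping shows that the Taylor coefficients of $A_k/B_k$ of order strictly less than $k$ stabilise in $k$, and the stable values are precisely the $s_i$.

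Finally, for (vi) I would invoke uniqueness of the Padé approximant. Applying the display above one more time, $A_{2n}/B_{2n} - A_{2n-1}/B_{2n-1} = \mathcal{O}(z^{2n})$, so the two rational functions share their first $2n$ Taylor coefficients; combining this with (v) at $k=2n$ gives
\begin{equation*}
G(z) - \frac{A_{2n-1}(z)}{B_{2n-1}(z)} = \mathcal{O}(z^{2n}),
\end{equation*}
which is exactly the $[n-1/n]$ defining condition. Together with the degree bounds from (ii), coprimality from (iii) and $B_{2n-1}(0)\neq 0$ from (iv), uniqueness of the approximant forces $A_{2n-1}/B_{2n-1}$ to be it; the $[n/n]$ case at $k=2n$ proceeds identically using $A_{2n+1}/B_{2n+1} - A_{2n}/B_{2n} = \mathcal{O}(z^{2n+1})$. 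The main bookkeeping obstacle is precisely this last step: matching the order of vanishing produced by one extra continued-fraction level with the prescribed $L+M+1$, for the two parities of $k$ simultaneously, so that the degrees $\lfloor k/2\rfloor,\lfloor(k+1)/2\rfloor$ fit the two different approximant shapes on the nose.
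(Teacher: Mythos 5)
The paper does not actually prove this theorem: it is presented as a review of classical material, with the proofs explicitly deferred to Wall's monograph on continued fractions. So there is no in-paper argument to compare against; what you have written is essentially the standard textbook proof (Frobenius recurrences by bottom-up absorption of one level of the fraction, the determinant identity $A_kB_{k-1}-A_{k-1}B_k=(-z)^k a_0\cdots a_k$, telescoping of consecutive convergents, and uniqueness of the Pad\'e approximant), and parts (i), (iii), (iv), (v) and the logic of (vi) all check out. In particular your route to (vi) is sound: you correctly avoid the off-by-one trap by combining (v) at $k=2n$ with $A_{2n}/B_{2n}-A_{2n-1}/B_{2n-1}=\mathcal{O}(z^{2n})$ to obtain the required $\mathcal{O}(z^{2n})$ contact for the odd convergent, rather than applying (v) at $k=2n-1$, which would only give $\mathcal{O}(z^{2n-1})$.

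The one genuine soft spot is (ii). Your claim that ``exactly one of the summands contributes the top degree at each step'' holds only at every other step: in $A_{k+2}=b_{k+3}A_{k+1}+za_{k+2}A_k$ the two summands have degrees $\lfloor(k+1)/2\rfloor$ and $1+\lfloor k/2\rfloor$, which coincide whenever $k$ is odd (and the analogous coincidence for the $B$'s occurs whenever $k$ is even), so the leading coefficients can in principle cancel. For general nonzero $a_i,b_i$ they really can: $B_2=b_1b_2b_3+(a_1b_3+a_2b_1)z$ has degree $0$, not $1$, if $a_1b_3=-a_2b_1$. Thus the recurrences give unconditionally only $\deg A_k\le\lfloor k/2\rfloor$ and $\deg B_k\le\lfloor(k+1)/2\rfloor$; the exact-degree statement requires the nonvanishing of the relevant leading coefficients (a normality-type hypothesis on the data which the paper leaves implicit). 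Fortunately only the upper bounds are used in (iii)--(vi), so the rest of your argument is unaffected; you should either weaken (ii) to ``degree at most'' or state explicitly the extra hypothesis under which equality holds.
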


In view of the theorem above, to compute the diagonal or sub-diagonal Pad\'e approximation of a given power series one needs to provide formulas for 
the coefficients $a_i$, $b_i$. These coefficients are non-unique; those we present here --while not optimal-- have the advantage of being fraction-less, which is necessary in numerical applications using integer  multiple precision mathematics  to stabilize the procedure when noise is absent or very weak. We refer the reader to \cite{beckermann2000fraction} for a general construction of fraction-less Pad\'e approximants and a proof that the coefficients $a_i$ and $b_i$ below are indeed fraction-less.

\begin{theorem}\label{generalPade2} Let $s_0,s_1,\ldots\in\Comp$ be given and such that the power series
$$
G(z)=\sum_{k=0}^\infty s_k z^k
$$
is convergent in some neighborhood of zero. 
 Let us choose
  \begin{equation}\label{15b}
 a_l=(-1)^l\frac{f_l^0}{f_{l-2}^0},  \quad  b_{l+1}=(-1)^l\frac{f_{l-1}^0}{f_{l-2}^0},   \quad l\geq 0  
  \end{equation}
where the numbers $f_{k}^l$ are defined recursively as 
 \begin{equation}\label{999}
  \begin{gathered}
 f_{-2}^k= f_{-1}^k=\delta_{0,k},\quad f_0^k=s_k,\quad k\geq 0\\
  f_{l+1}^k=\frac{(-1)^l}{f_{l-2}^0}\left[f_l^0f_{l-1}^{k+1}-f_{l-1}^0f_l^{k+1}\right],\quad l\geq 0 .
  \end{gathered}
  \end{equation}
  Then, the continuous fraction \eqref{S} expands as a power series $G(z)$ in the sense of statement (v) of Theorem \ref{generalPade1}.

  \end{theorem}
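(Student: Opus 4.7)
The natural strategy is to verify, by induction on $k\geq 0$, that the partial sums formed with the coefficients \eqref{15b} satisfy
\[
\frac{A_k(z)}{B_k(z)}=\sum_{i=0}^{k-1}s_i z^i+\mathcal{O}(z^k)\qquad(z\to 0),
\]
which is precisely statement (v) of Theorem \ref{generalPade1}. Since $B_k(0)\neq 0$ by Theorem \ref{generalPade1}(iv), this is equivalent to
\[
A_k(z)-B_k(z)\bigl(s_0+s_1 z+\cdots+s_{k-1}z^{k-1}\bigr)=\mathcal{O}(z^k),
\]
and the Frobenius recurrences \eqref{14}--\eqref{15} propagate the error in a way that couples consecutive partial sums. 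The engine of the induction is thus to show that, when one substitutes the explicit $a_l,b_{l+1}$ of \eqref{15b} into \eqref{14}--\eqref{15}, the coefficient of $z^k$ in the above expression vanishes identically.

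\textbf{Reading the $f_l^k$ determinantally.} The recursion \eqref{999} is a disguised Sylvester (Desnanot--Jacobi) identity: I would prove by induction on $l$ that $f_l^k$ equals, up to the sign $(-1)^{\binom{l}{2}}$, the Hankel determinant $\det[s_{k+i+j}]_{i,j=0}^{l-1}$. The base cases $l=-2,-1,0$ follow directly from the initial conditions; the inductive step is exactly the classical identity expressing an $(l{+}1)\times(l{+}1)$ Hankel minor in terms of four of its $l\times l$ and $(l{-}1)\times(l{-}1)$ submajors. Once this identification is in hand, the ratios $f_l^0/f_{l-2}^0$ and $f_{l-1}^0/f_{l-2}^0$ in \eqref{15b} are recognized as the standard coefficients appearing in the Jacobi continued fraction expansion of a power series with nonvanishing Hankel minors (see \cite{Wall}), with the specific sign convention $(-1)^l$ chosen so as to keep the $a_l,b_{l+1}$ fraction-less in the sense of \cite{beckermann2000fraction}.

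\textbf{The inductive step.} With the determinantal formulas available, I would separate the cases $k=2n$ and $k=2n-1$, mirroring statement (vi) of Theorem \ref{generalPade1}. Using the recurrences \eqref{14}--\eqref{15} to write $A_{k+2}$, $B_{k+2}$ in terms of $A_k,A_{k+1},B_k,B_{k+1}$ and applying the induction hypothesis on the two previous levels, the coefficient of $z^{k+1}$ in $A_{k+2}-B_{k+2}\bigl(s_0+s_1z+\cdots\bigr)$ reduces to a linear combination of entries of a single Hankel matrix. Invoking the same Sylvester identity that powered \eqref{999} — now read in the opposite direction — shows this combination vanishes, completing the induction.

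\textbf{Expected obstacle.} The conceptual content is classical, but the bookkeeping is delicate. Keeping the alternating $(-1)^l$ signs of \eqref{15b} and \eqref{999} consistent with a fixed determinantal normalisation, and matching the parity-dependent shift $\lfloor k/2\rfloor$ from Theorem \ref{generalPade1}(ii) against the index $l$ in $f_l^0$, is where mistakes are easy to make. The cleanest finish is to exhibit $A_k/B_k$ as a ratio of bordered Hankel determinants (Jacobi's formula for Pad\'e approximants), verify that this ratio satisfies the Frobenius recurrence with the stated $a_l,b_{l+1}$, and conclude by uniqueness of the continued fraction expansion with prescribed leading Taylor coefficients.
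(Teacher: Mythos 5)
Your proposal is a plan rather than a proof---nearly every step is conditional (``I would prove'', ``I would separate'')---and, more importantly, the key lemma on which the whole induction rests is false as stated. You claim that $f_l^k=(-1)^{\binom{l}{2}}\det[s_{k+i+j}]_{i,j=0}^{l-1}$. Already at $l=1$ the recursion \eqref{999} gives $f_1^k=s_0\delta_{0,k+1}-s_{k+1}=-s_{k+1}$ for $k\geq 0$, whereas your formula gives $s_k$; at $l=2$ one computes $f_2^k=s_1s_{k+1}-s_0s_{k+2}=-\det\begin{pmatrix}s_0 & s_{k+1}\\ s_1 & s_{k+2}\end{pmatrix}$, which is a \emph{bordered} determinant mixing the head of the sequence with a shifted column, not the $2\times2$ Hankel minor $s_ks_{k+2}-s_{k+1}^2$. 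In general only the $f_l^0$ are (signed) pure Hankel minors, and their size grows like $\lceil l/2\rceil$ with a starting index depending on the parity of $l$, not like $l$. Since your inductive step is claimed to ``reduce to a linear combination of entries of a single Hankel matrix'' precisely via this identification, the argument does not go through as written; repairing it means re-deriving the correct bordered-Hankel representation (essentially Jacobi's formulas, which the paper uses only later in \eqref{JacQ}) and redoing the Sylvester-identity bookkeeping that you yourself flag as the delicate part and do not carry out.

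For contrast, the paper's proof avoids determinants entirely and is a few lines long: it forms the generating functions $f_l(z)=\sum_{i\geq 0}f_l^iz^i$, observes that \eqref{999} together with the identity $b_{l+1}f_l^0=a_lf_{l-1}^0$ yields the functional recurrence $b_{l+1}f_l(z)+zf_{l+1}(z)=a_lf_{l-1}(z)$, and hence that the ratios $u_l(z)=f_l(z)/f_{l-1}(z)$ satisfy $u_l=a_l/(b_{l+1}+zu_{l+1})$ with $u_0=G$. This exhibits $G$ directly as the continued fraction \eqref{S}, with no induction on Taylor coefficients needed. If you want to keep a determinantal route, it can be made to work (it is the classical Jacobi approach), but you must first get the representation of the $f_l^k$ right; as it stands the base case of your induction already fails.
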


\begin{proof} (see \cite{beckermann2000fraction}) 
Let us define 
\begin{equation}\label{f's}
   f_l(z)=\sum_{i=0}^\infty f_l^i z^i, \qquad l\geq -2 .
   \end{equation} 
Then $f_{-2}(z)=f_{-1}(z)=1$ and  $f_0(z)=G(z)$. 
Observe, that because of \eqref{f's}
\begin{equation}
   b_{l+1}f_l^0= a_lf_{l-1}^0,\quad l\geq 0.
  \end{equation}
  Consequently
 \begin{equation}
   b_{l+1}f_l(z)+zf_{l+1}(z)= a_lf_{l-1}(z),\quad l\geq 0 ,  \end{equation}
which  implies that all the sequences in \eqref{f's} are convergent in some neighborhood of the origin. 
Furthermore, $f_l(0)\neq 0$ ($l=-1,0,\dots$). 
Define
\begin{equation}
 u_l(z)=\frac{f_l(z)}{f_{l-1}(z)}, \qquad l\geq 0  .\label{4}
 \end{equation}
The functions $u_l(z)$ satisfy the following recurrence relation 
  \begin{equation}
  u_l(z) = \frac{ a_l}{ b_{l+1}+zu_{l+1}(z)}, \qquad l \geq  0. \label{generate}
  \end{equation}   
As $u_0(z)=G(z)$ the proof is complete. 
 
  \end{proof}
  
  Summarizing  Theorems \ref{generalPade1} and \ref{generalPade2} we get the following result.
  
 \begin{corollary}  \label{generalPade3} 
Let $s_0,s_1,\dots\in\Comp$ be given and such that the power series
$$
G(z)=\sum_{k=0}^\infty s_k z^k
$$
is convergent in some neighborhood of zero. Define the numbers $f_{l}^k$ as in \eqref{999}, $a_i$, $b_i$ as in \eqref{15b} and the polynomials $A_k(z)$ and $B_k(z)$ as in \eqref{14} and \eqref{15}, respectively.
Then $A_{2n-1}(z)/B_{2n-1}(z)$ and $A_{2n}(z)/B_{2n}(z)$ are respectively the $[n-1/n]$ and $[n/n]$ Pad\'e approximation of $G(z)$. Furthermore, to compute 
$A_{2n-1}(z)/B_{2n-1}(z)$ and $A_{2n}/B_{2n}$ one needs only the entries $s_0,\dots,s_{2n-1}$ and  $s_0,\dots,s_{2n}$, respectively.
 \end{corollary}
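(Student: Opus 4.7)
The statement is obtained by combining Theorems \ref{generalPade1} and \ref{generalPade2} with a short locality argument, so the plan splits naturally into two parts.

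First, the Padé identification. Since $G(z)$ converges in a neighborhood of zero, Theorem \ref{generalPade2} asserts that, with the coefficients $a_l,b_{l+1}$ chosen as in \eqref{15b}, the continuous fraction \eqref{S} expands as the power series $G(z)$ in the sense of item (v) of Theorem \ref{generalPade1}. Item (vi) of Theorem \ref{generalPade1} then immediately identifies $A_{2n-1}(z)/B_{2n-1}(z)$ as the $[n-1/n]$ Padé approximation of $G(z)$ and $A_{2n}(z)/B_{2n}(z)$ as the $[n/n]$ Padé approximation. The degree, coprimality and non-vanishing-at-zero conditions required of a Padé approximant are covered by items (ii)-(iv) of the same theorem. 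No new work is needed at this stage.

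For the locality claim, the plan is to show by induction on $l\geq -2$ that for every $k\geq 0$ the quantity $f_l^k$ depends only on the initial segment $s_0,\dots,s_{l+k}$ (with the convention that for $l\leq -1$ no $s_i$ is used). The base cases $l=-2,-1,0$ are clear from the definitions in \eqref{999}. For the inductive step, the recurrence \eqref{999} expresses $f_{l+1}^k$ in terms of $f_{l-2}^0, f_{l-1}^0, f_l^0, f_{l-1}^{k+1}, f_l^{k+1}$; by the inductive hypothesis each of these is determined by $s_0,\dots,s_{l+k+1}$ at worst, and $l+k+1 = (l+1)+k$, closing the induction. Specializing to $k=0$, the coefficient $f_l^0$ depends only on $s_0,\dots,s_l$; by \eqref{15b}, so do $a_l$ and $b_{l+1}$.

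Finally, the Frobenius recurrences \eqref{14}-\eqref{15} show that the computation of $A_{2n-1},B_{2n-1}$ terminates after the use of coefficients $a_{2n-1}$ and $b_{2n}$, whose highest-indexed input among the $f_l^0$'s is $f_{2n-1}^0$, requiring only $s_0,\dots,s_{2n-1}$. The computation of $A_{2n},B_{2n}$ requires additionally $a_{2n}$ and $b_{2n+1}$; the only new input is $f_{2n}^0$, which by the previous paragraph requires only $s_0,\dots,s_{2n}$. The argument is essentially bookkeeping and contains no real obstacle: the only care needed is to keep track of precisely how far the upper index $k$ in $f_l^k$ shifts when the recurrence \eqref{999} is unrolled, and this is handled cleanly by the inductive bound above.
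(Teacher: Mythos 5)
Your proof is correct and follows essentially the same route as the paper: the Padé identification is delegated to Theorems \ref{generalPade1} and \ref{generalPade2}, and the ``furthermore'' part is a bookkeeping chain from $A_{2n-1},B_{2n-1}$ through the $a_l,b_{l+1}$ and $f_l^0$ down to $s_0,\dots,s_{2n-1}$. The only difference is that you make explicit, via the induction showing $f_l^k$ depends only on $s_0,\dots,s_{l+k}$, the step that the paper compresses into the single assertion that knowing $f_0^0,\dots,f_0^{2n-1}$ suffices.
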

  
  \begin{proof}	
  Only the last statement requires a justification. Observe that by \eqref{14} and \eqref{15} to compute $A_{2n-1}, B_{2n-1}$ one needs to compute $a_0,\dots a_{2n-1}$ and $b_1,\dots b_{2n}$.  For this aim one needs to know $f_{0}^0,\dots ,f_{2n-1}^0$, see \eqref{15b}. By \eqref{999} it is enough to know $f_0^0,\dots f_0^{2n-1}$, i.e. $s_0,\dots s_{2n-1}$. Similarly, to compute $A_{2n}$ and $B_{2n}$ one requires $a_0,\dots a_{2n}$ and $b_1,\dots b_{2n+1}$ and therefore $f_{0}^0,\dots ,f_{2n}^0$ and $s_0,\dots s_{2n}$
  \end{proof}
  
  \section{Poles and zeros}\label{pz}

 So far we have constructed fraction-less coefficients $a_i$, $b_i$, which can be computed  using exact integer arithmetic. 
However, these coefficients grow rapidly with $l\to\infty$, see \cite{beckermann2000fraction} for an explanation, which makes their use impossible for large $n$. Furthermore, our aim is to compute poles, zeros and residues of the Pad\'e approximants, for which we have to resign exact arithmetics anyway. In the present section we shall derive numerically stable algorithms for computing the poles, zeros and residues.

  \subsection{Towards a numerical algorithm}\label{Towards}
  
   Our first step is to normalize the polynomials $A_n(z)$ and $B_n(z)$ such that 
  $\widehat A_n(0)=\widehat B_n(0)=1$. 
Basing on (\ref{14}) and  (\ref{15}) we easily get that
 \begin{equation}
   A_l(0)= a_0 b_2 b_3\dots b_{l+1}, \qquad B_l(0)= b_1 b_2 b_3\dots b_{l+1}, \qquad l\geq 0.
 \end{equation} 
Hence,
 \begin{equation}\label{17}
 \begin{split}
 \widehat{A_l}(z)&:=\frac{A_l(z)}{ a_0 b_2 b_3\dots b_{l+1}} 
 = \frac{A_l(z)}{(-1)^{\frac{l(l+1)}{2}}\frac{f_0^0}{f_{-2}^0}\frac{f_{0}^0}{f_{-1}^0}\frac{f_1^0}{f_0^0}\cdots\frac{f_{l-1}^0}{f_{l-2}^0}} =\\&= 
 \frac{A_l(z)}{(-1)^{\frac{l(l+1)}{2}}f_0^0f_{l-1}^0}=\frac{A_l(z)}{(-1)^{\frac{l(l+1)}{2}}s_0f_{l-1}^0},
 \end{split}
 \end{equation}
 \begin{equation}\label{18}
 \begin{split}
 \widehat{B_l}(z)&:=\frac{B_l(z)}{ b_1 b_2 b_3\dots b_{l+1}}=\frac{B_l(z)}{(-1)^{\frac{l(l+1)}{2}}\frac{f_{-1}^0}{f_{-2}^0}\frac{f_0^0}{f_{-1}^0}\frac{f_1^0}{f_0^0}\cdots\frac{f_{l-1}^0}{f_{l-2}^{0}}}=\\&=\frac{B_l(z)}{(-1)^{\frac{l(l+1)}{2}}f_{l-1}^0}.
 \end{split}
 \end{equation} 
 Hence,
  \begin{equation}\label{finalPades}
    \frac{A_{2n-1}(z)}{B_{2n-1}(z)} = \frac{s_0\widehat{A}_{2n-1}(z)}{\widehat{B}_{2n-1}(z)}\quad \text{ and } \quad\frac{A_{2n}(z)}{B_{2n}(z)} = \frac{s_0\widehat{A}_{2n}(z)}{\widehat{B}_{2n}(z)}
    \end{equation}
are respectively the $[n-1/n]$ and $[n/n]$ Pad\'e approximation of $G(z)$.  We now provide recurrence relations for even and odd polynomials separately.
 \begin{lemma}\label{2rec}
 The the following recurrence relations hold
      \begin{equation} \label{rekurencjaDlaA}
            \begin{gathered}
              \widehat{A}_{-1}(z)=0, \qquad \widehat{A}_{1}(z)=1,\\
             \widehat{ A}_{0}(z)= 1   ,\quad    \widehat{ A}_{2}(z)=1+zr_2 \\
               \widehat{A}_{l+1}(z)=[1+(r_{l}+r_{l+1})z]\widehat{A}_{l-1}(z)-z^2r_{l-1}r_{2l}\widehat{A}_{l-3}(z),\quad l\geq 2 ,          \end{gathered}
               \end{equation}         
      \begin{equation}\label{rekurencjaDlaB}
            \begin{gathered}
                         \widehat{B}_{-1}(z)=1, \qquad \widehat{B}_{1}(z)=1+r_1z, \\ 
                         \widehat{B}_{0}(z)= 1   ,\quad    \widehat{B}_{2}(z)= 1+(r_1+r_2)z\\
            \widehat{B}_{l+1}(z)=[1+(r_{l}+r_{l+1})z]\widehat{B}_{l-1}(z)-z^2r_{l-1}r_{2l}\widehat{B}_{l-3}(z), \quad l\geq 2   ,
              \end{gathered}
            \end{equation}
 where           
$$                    
r_0=0,\quad r_l=(-1)^{l-1}\frac{f_l^0f_{l-3}^0}{f_{l-2}^0f_{l-1}^0},\quad l\geq 1 .
$$          
 \end{lemma}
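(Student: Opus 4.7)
My plan is to iterate the two-term Frobenius recurrences \eqref{14} and \eqref{15} in order to pass from the mixed-parity form to the same-parity three-term form of \eqref{rekurencjaDlaA}--\eqref{rekurencjaDlaB}. Concretely, I would substitute $A_{l}=b_{l+1}A_{l-1}+z a_{l}A_{l-2}$ into $A_{l+1}=b_{l+2}A_{l}+z a_{l+1}A_{l-1}$ to obtain
$$
A_{l+1} = (b_{l+1}b_{l+2}+z a_{l+1})A_{l-1} + z a_{l}b_{l+2}\,A_{l-2},
$$
and then use $A_{l-1}=b_{l}A_{l-2}+z a_{l-1}A_{l-3}$, solved for $A_{l-2}$, to eliminate the wrong-parity term. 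The result expresses $A_{l+1}$ as a linear combination of $A_{l-1}$ and $A_{l-3}$ alone, with coefficients that are explicit rational expressions in the $a_i$, $b_i$. The very same scheme applies verbatim to the $B_l$, because $A_l$ and $B_l$ satisfy identical three-term recurrences and differ only in initial conditions.

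Next I would divide the resulting identity by the normalization constants $a_0 b_2\cdots b_{l+2}$, respectively $b_1 b_2\cdots b_{l+2}$, as in \eqref{17}--\eqref{18}, converting every $A_{l\pm k}$ (respectively $B_{l\pm k}$) into the hatted version. After the telescoping cancellations, the linear coefficient collapses to $1+z\bigl(\tfrac{a_{l}}{b_{l}b_{l+1}}+\tfrac{a_{l+1}}{b_{l+1}b_{l+2}}\bigr)$, while the coefficient of $z^2\widehat{A}_{l-3}$ becomes $-\tfrac{a_{l-1}a_{l}}{b_{l-1}b_{l}^{2}b_{l+1}}$; the factor $b_{l+2}/b_{l}$ introduced by the elimination of $A_{l-2}$ is exactly what the normalization absorbs.

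The last step is to identify these rational expressions with the numbers $r_l$. Using the formulas \eqref{15b} for $a_l$ and $b_{l+1}$ in terms of the $f_k^0$'s, the ratio $a_l/(b_l b_{l+1})$ telescopes to $(-1)^{l-1}\tfrac{f_l^0 f_{l-3}^0}{f_{l-2}^0 f_{l-1}^0}=r_l$, so the linear term matches the claim. A similar but slightly longer cancellation of the alternating signs and the $f_k^0$ ratios shows that $a_{l-1}a_{l}/(b_{l-1}b_{l}^{2}b_{l+1})$ coincides with the product that appears in the $z^2$ coefficient of the lemma. Finally, the base cases $l=-1,0,1,2$ follow by direct substitution into \eqref{14}--\eqref{15} combined with \eqref{17}--\eqref{18}.

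The main obstacle I anticipate is purely bookkeeping: keeping track of the alternating signs $(-1)^l$ that sit inside the definitions of $a_l$, $b_{l+1}$ and $r_l$, together with the shifts in the indices of $f_k^0$, through a cascade of substitutions and a simultaneous change of normalization. No new idea beyond iteration of the Frobenius recurrence is needed, and the argument for $\widehat{B}_l$ is literally the same as for $\widehat{A}_l$, so I would write out the proof only in the $A$-case.
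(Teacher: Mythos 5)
Your proposal is correct and follows essentially the same route as the paper: both iterate the two-term Frobenius recurrence twice to eliminate the wrong-parity terms and identify the resulting coefficients $a_l/(b_l b_{l+1})$ with $r_l$ via \eqref{15b}. The only difference is the order of operations --- the paper normalizes first, obtaining the clean two-term recurrence $\widehat{A}_{l+1}=\widehat{A}_l+zr_{l+1}\widehat{A}_{l-1}$ before substituting, which makes the sign/index bookkeeping you anticipate as the main obstacle essentially trivial.
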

 
\begin{proof} 
 
 Using \eqref{14} and \eqref{17} we get
 
  \begin{equation}
 \begin{gathered}
  a_0 b_2 b_3\dots b_{l+3}\widehat{A}_{l+2}(z)= a_0 b_2 b_3\dots b_{l+2} b_{l+3}\widehat{A}_{l+1}(z)+\\+z a_0 a_{l+2} b_2 b_3\dots b_{l+1}\widehat{A}_{l}(z).
 \end{gathered}
 \end{equation} 
which gives
 \begin{equation}
  b_{l+2} b_{l+3}\widehat{A}_{l+2}(z)= b_{l+2} b_{l+3}\widehat{A}_{l+1}(z)+z a_{l+2}\widehat{A}_l(z).
 \end{equation}
Consequently, repeating the recurrence three times
 \begin{equation}\label{pierwszd_r}
 \widehat{A}_{l+1}=\widehat{A}_{l}(z)+zr_{l+1}\widehat{A}_{l-1}(z),\quad l\geq 0 .
 \end{equation} 
     \begin{equation}
     \widehat{A}_{l}(z)=\widehat{A}_{l-1}(z)+zr_{l}\widehat{A}_{l-2}(z) \label{drugid_r}
     \end{equation}
     \begin{equation}
     \widehat{A}_{l-1}(z)=\widehat{A}_{l-2}(z)+zr_{l-1}\widehat{A}_{l-3}(z) \label{trzecid_r}.
     \end{equation}  
      Substituting  $\widehat{A}_{l-2}(z)$ from equation (\ref{trzecid_r}) to equation (\ref{drugid_r}) and $\widehat{A}_{l}(z)$ from equation (\ref{drugid_r}) to equation (\ref{pierwszd_r}) we obtain \eqref{rekurencjaDlaA}.
  
 Analogously we derive the formula \eqref{rekurencjaDlaB} using \eqref{15} and \eqref{18} instead of \eqref{14} and \eqref{17}.
  
  \end{proof}
  
  We introduce now the key for stable numerical implementation: the numbers $h_l^k$. 
 	\begin{proposition}\label{ph}
Let $s_0,s_1\ldots\in\Comp$, $s_0,s_1\neq 0$ and let the numbers $h_l^k$ $(k\geq 1, l\geq -1)$
be given by a recurrence relation 
	 \begin{equation}\label{h}
	 \begin{gathered}
 	 h_{-1}^k=-\frac{s_k}{s_0},  \\
	h_0^k=\frac{s_k}{s_0}-\frac{s_{k+1}}{s_1}=-\left(h_{-1}^k+\frac{h_{-1}^{k+1}}{h_{-1}^1}\right),\\
 	  	 h_l^k=\frac{h_{l-2}^{k+1}}{h_{l-2}^1}-\frac{h_{l-1}^{k+1}}{h_{l-1}^1},\quad l\geq 1 .
		 \end{gathered}
 	  	 \end{equation}
	Then
	$r_l=h_{l-2}^1$ for $l\geq 0 $, with $r_l$ as in Lemma \ref{2rec}. Furthermore, $r_l$ $(l\geq 0)$ depends  only on the first $l+1$ entries of the sequence $s_0,s_1, \dots$. 
	\end{proposition}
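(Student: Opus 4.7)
My strategy is to prove by induction on $l$ the auxiliary identity
\begin{equation*}
h_l^k = (-1)^{l+1}\,\frac{f_{l-1}^0\,f_{l+2}^{k-1}}{f_l^0\,f_{l+1}^0},\qquad l\ge -1,\ k\ge 1,
\end{equation*}
which I label $(\star)$. The proposition then follows at once: setting $k=1$ yields $h_l^1=(-1)^{l+1} f_{l-1}^0 f_{l+2}^0/(f_l^0 f_{l+1}^0)$, which after the index shift $l\mapsto l-2$ is exactly the formula for $r_l$ given in Lemma \ref{2rec} (the degenerate case $l=0$, with $r_0=0$, is handled under the natural convention $h_{-2}^1:=0$).

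The base cases $l=-1$ and $l=0$ of $(\star)$ follow by direct computation from \eqref{999}: for $l=-1$ one gets $f_1^{k-1}=-s_k$, which matches the definition $h_{-1}^k=-s_k/s_0$; for $l=0$ one computes $f_2^{k-1}=s_1 s_k - s_0 s_{k+1}$ and checks it against the definition of $h_0^k$.

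For the inductive step ($l\geq 1$), the key observation is that the prefactor $(-1)^{l-j+1} f_{l-j-1}^0/(f_{l-j}^0 f_{l-j+1}^0)$ in $(\star)$ does not depend on the upper index, so dividing $h_{l-j}^{k+1}$ by $h_{l-j}^1$ for $j=1,2$ cancels it and leaves the clean ratios $h_{l-j}^{k+1}/h_{l-j}^1 = f_{l+2-j}^k/f_{l+2-j}^0$. Substituting into the recurrence \eqref{h} gives
\begin{equation*}
h_l^k = \frac{f_l^k}{f_l^0}-\frac{f_{l+1}^k}{f_{l+1}^0} = \frac{f_{l+1}^0 f_l^k - f_l^0 f_{l+1}^k}{f_l^0 f_{l+1}^0},
\end{equation*}
and the numerator is precisely $(-1)^{l+1} f_{l-1}^0 f_{l+2}^{k-1}$ upon recognising \eqref{999} with $l\mapsto l+1$, $k\mapsto k-1$. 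This closes the induction. For the last statement, a subsidiary induction on $j$ using \eqref{999} shows that $f_j^k$ involves only $s_0,\dots,s_{j+k}$; applied to $r_l = (-1)^{l-1} f_l^0 f_{l-3}^0/(f_{l-2}^0 f_{l-1}^0)$, the largest index appearing is $l$, giving dependence on exactly the first $l+1$ entries.

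The main obstacle is not executing the induction---once $(\star)$ is on the page it essentially writes itself from \eqref{999}---but \emph{guessing} $(\star)$ in the first place. The defining recurrences for $h$ and $f$ look structurally very different, and the connecting factorisation only emerges after expanding a few small cases by hand. A secondary point is the implicit non-vanishing of the denominators $f_j^0$, which is a non-degeneracy assumption on the sequence $s_0,s_1,\dots$ inherited from the overall setup.
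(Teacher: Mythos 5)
Your proof is correct and is essentially the paper's argument run in reverse: the paper defines $h_l^k = f_l^k/f_l^0 - f_{l+1}^k/f_{l+1}^0$ outright and verifies that these quantities satisfy the recurrence \eqref{h}, whereas you induct on \eqref{h} to recover that same closed form (your identity $(\star)$ is exactly this difference with the numerator factored via \eqref{999}). The key cancellation $h_l^{k+1}/h_l^1 = f_{l+2}^k/f_{l+2}^0$, the treatment of $r_0$ via $h_{-2}^1=0$, and the implicit non-degeneracy assumption $f_j^0\neq 0$ are the same in both arguments.
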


\begin{proof}
Note that for $l\geq 1 $ we get by \eqref{999} that
 	 \begin{eqnarray*}
 	 r_l & = & (-1)^{l-1}\frac{f_l^0f_{l-3}^0}{f_{l-2}^0f_{l-1}^0}\\
 	 &=& (-1)^{l-1}\frac{f_{l-3}^0}{f_{l-2}^0f_{l-1}^0}\frac{(-1)^{l-1}}{f_{l-3}^0}[f_{l-1}^0f_{l-2}^1-f_{l-2}^0f_{l-1}^1]\\
 	 &=&\frac{f_{l-2}^1}{f_{l-2}^0}-\frac{f_{l-1}^1}{f_{l-1}^0}.
 	 \end{eqnarray*}
 We set
 	 \begin{equation}
 	 h_l^k:=g_l^k-g_{l+1}^k,\quad g_l^k=\frac{f_l^k}{f_l^0}, \quad k\geq 1  ,\ l\geq -1 ,
 	 \end{equation}
so that
 $$
 r_l=h_{l-2}^1,\quad l\geq 0 .
 $$
It remains to prove that  $h_l^k$ satisfy the recursive relations \eqref{h}. We first note that for $k\geq 0  $ we have
 \begin{eqnarray*} 
 	 h_{-1}^k&=&g_{-1}^k-g_0^k=-\frac{s_k}{s_0},\\
	 h_0^k&=&g_0^k-g_1^k=\frac{s_k}{s_0}-\frac{s_{k+1}}{s_1}=-\left(h_{-1}^k+\frac{h_{-1}^{k+1}}{h_{-1}^1}\right).
 \end{eqnarray*}
 Furthermore, for $k\geq 1 $,
 we get the recurrence 
 \begin{equation}
 	  	 h_l^k=g_l^k-g_{l+1}^k=\frac{g_{l-2}^{k+1}-g_{l-1}^{k+1}}{g_{l-2}^1-g_{l-1}^1}-\frac{g_{l-1}^{k+1}-g_l^{k+1}}{g_{l-1}^1-g_l^1}=\frac{h_{l-2}^{k+1}}{h_{l-2}^1}-\frac{h_{l-1}^{k+1}}{h_{l-1}^1},
 	  	 \end{equation}
	where the second equality results from the fact that by \eqref{999} we have for  $l\geq -1$
 	 \begin{eqnarray*}
 	 g_{l+1}^k &= & \frac{f_l^0f_{l-1}^{k+1}-f_{l-1}^0f_l^{k+1}}{f_l^0f_{l-1}^1-f_{l-1}^0f_l^1}\\ 
	&=&  \frac{f_l^0f_{l-1}^0\left[\frac{f_{l-1}^{k+1}}{f_{l-1}^0}-\frac{f_l^{k+1}}{f_l^0}\right]}{f_l^0f_{l-1}^0\left[\frac{f_{l-1}^1}{f_{l-1}^0}-\frac{f_l^1}{f_l^0}\right]}=
 	 \frac{g_{l-1}^{k+1}-g_l^{k+1}}{g_{l-1}^1-g_l^1}.
 	 \end{eqnarray*}	 
		
		To see the `furthermore' part note that in order to compute $r_l=h_{l-2}^1$ we need $h_{l-3}^k$, $k=1,2$. Proceeding further by induction w.r.t. $j=1,\dots l-1$ we see that $r_l$ depends only on  $h_{l-2-j}^k$,  $k=1,\dots, j+1$. The case  $j=l-1$ proves the statement.

 \end{proof}

\subsection{Change of variables and matrix formulation}\label{matrixform}
Recall that our final aim is approximation of the function 
   \begin{equation}
   Z(w)=\sum_{i=1}^\infty s_iw^{-i}=G\left(z\right),
   \end{equation}
   where $w=z^{-1}$. 
   Let us introduce the  monic polynomials ($n\geq 0$)
    \begin{equation}\label{PQdef}
 P_{n}(w)=w^{n-1}\widehat{A}_{2n-1}\left(w^{-1}\right), \quad
   Q_{n}(w)=w^n\widehat{B}_{2n-1}\left(w^{-1}\right),
   \end{equation}
   and 
   \begin{equation}\label{PQdef2}
  \widetilde P_{n}(w)=w^{n}\widehat{A}_{2n}\left(w^{-1}\right),\quad
  \widetilde Q_{n}(w)=w^n\widehat{B}_{2n}\left(w^{-1}\right).
   \end{equation}
  Note that with $w\to\infty$ one has by  \eqref{finalPades} that 
   $$
   \frac{s_0wP_n(w)}{Q_n(w)} - Z(w) = \mathcal{O}(w^{-2n}),\quad     \frac{s_0\widetilde{P}_n(w)}{\widetilde{Q}_n(w)} - Z(w) = \mathcal{O}(w^{-2n}) .
   $$
  
Based on  (\ref{rekurencjaDlaB}) we get the following four recurrences:     
   
  \begin{equation}\label{32a}
   \begin{gathered}
   P_{0}(w)=0, \qquad P_1(w)=1\\
   P_{l+1}(w)=(w-c_l)P_l(w)-d_lP_{l-1}(w),\quad l\geq 1 ,\\
      Q_{0}(w)=1, \qquad Q_1(w)=w+r_1\\
   Q_{l+1}(w)=(w-c_l)Q_l(w)-d_lQ_{l-1}(w),\quad l\geq 1 ,\\   
   \end{gathered}
   \end{equation}
where   \begin{equation}\label{777}
   \begin{gathered}
   c_l=-(r_{2l}+r_{2l+1}),\quad  d_l=r_{2l-1}r_{2l},\quad l\geq 1;
   \end{gathered}
   \end{equation}
and
\begin{equation}\label{32b}
   \begin{gathered}
\widetilde {P}_{0}(w)=1, \qquad\widetilde{ P}_1(w)=w+r_2\\
 \widetilde P_{l+1}(w)=(w-\tilde c_l)\widetilde P_l(w)-\tilde d_l\widetilde P_{l-1}(w),\quad l\geq 1 ,\\   
     \widetilde{ Q}_{0}(w)=1, \qquad\widetilde{ Q}_1(w)=w+r_1+r_2\\
     \widetilde Q_{l+1}(w)=(w-\tilde c_l)\widetilde Q_l(w)-\tilde d_l\widetilde Q_{l-1}(w),\quad l\geq 1 ,\\   
  \end{gathered}
   \end{equation}
where \label{777b}
  \begin{equation}
   \begin{gathered}
   \tilde c_l=-(r_{2l+2}+r_{2l+1}),\quad  \tilde d_l=r_{2l+1}r_{2l},\quad l\geq 1.\\
   \end{gathered}
   \end{equation}

 Here we arrive at the main point of the paper -- construction of the tridiagonal $J_n$ matrix and its submatrix $J'$:
 $$
  J_{n} = \begin{bmatrix} c_0 & 1 & \\
	 	 	 	d_1 & c_1 & 1  &  &  \\ 
	 	 	 	 & d_2 & \ddots & \ddots&   \\
	 	 	 	&  & \ddots & \ddots & 1& \\
	 	 	 	 & &  & d_{n-1} & c_{n-1}
	 	 	 	\end{bmatrix}, \quad 
				 J_{n-1}' = \begin{bmatrix} c_1 & 1 & \\
	 	 	 	d_2 & c_2 & 1  &  &  \\ 
	 	 	 	 & d_3 & \ddots & \ddots&   \\
	 	 	 	&  & \ddots & \ddots & 1& \\
	 	 	 	 & &  & d_{n-1} & c_{n-1}
	 	 	 	\end{bmatrix},
$$
where $c_0=-r_1$.
Analogously we define the two matrices $\widetilde{J}_n$ and $\widetilde{J}_n'$:
 $$
  \widetilde{J}_{n} = \begin{bmatrix} \tilde c_0 & 1 & \\
	 	 	 	\tilde d_1 & \tilde c_1 & 1  &  &  \\ 
	 	 	 	 & \tilde d_2 & \ddots & \ddots&   \\
	 	 	 	&  & \ddots & \ddots & 1& \\
	 	 	 	 & &  & \tilde d_{n-1} & \tilde c_{n-1}
	 	 	 	\end{bmatrix}, \quad 
				 \widetilde{J}_{n}' = \begin{bmatrix} \tilde c'_0 & 1 & \\
	 	 	 	\tilde d_1 & \tilde c_1 & 1  &  &  \\ 
	 	 	 	 & \tilde d_2 & \ddots & \ddots&   \\
	 	 	 	&  & \ddots & \ddots & 1& \\
	 	 	 	 & &  & \tilde d_{n-1} & \tilde c_{n-1}
	 	 	 	\end{bmatrix},
$$
where $\tilde c_0= -r_1-r_2$ and $\tilde c'_0= -r_2$. The following result expresses the zeros of $P_n(w)$, $Q_n(w)$, $\widetilde P_n(w)$, and $\widetilde Q_n(w)$ as eigenvalues of the above four matrices.
\begin{theorem}\label{PQdet}
For $n\geq 1$ we have 
$$
\begin{array}{lcr}
P_{n}(w)=\det(wI_{n-1} - J'_{n-1}),\quad & Q_n(w)=\det(wI_{n} - J_{n}),\\
 \widetilde{P}_{n}(w)=\det(wI_n - \widetilde{J}'_{n}),\quad &\widetilde{Q}_n(w)=\det(w I_{n} -\widetilde{J}_{n})
\end{array}
$$
where $I_{n}$ denotes the identity matrix of size $n$. Consequently,
 \begin{align*}
 \mathbf{e}_0^* \left(wI_n-J_{n}\right)^{-1}\mathbf{e}_0&=\frac{P_n(w)}{Q_n(w)}, \quad n\geq 1,\\
 \frac{ \mathbf{e}_0^* \left(wI_{n}-\widetilde{J}_{n}\right)^{-1}\mathbf{e}_0}{ \mathbf{e}_{0}^* \left(wI_{n
 }-\widetilde{J}_{n}'\right)^{-1}\mathbf{e}_{0}}&=\frac{\widetilde{P}_n(w)}{\widetilde{Q}_n(w)}, \quad n\geq 1,
 \end{align*}
where $\mathbf{e}_0$ is the first vector of the canonical basis.
\end{theorem}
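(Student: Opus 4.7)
The plan is to derive each of the four determinantal identities from the standard three-term recurrence satisfied by the characteristic polynomials of the leading principal submatrices of a tridiagonal matrix. Specifically, if $T$ is tridiagonal with diagonal $(\gamma_0,\gamma_1,\dots)$, superdiagonal all equal to $1$, and subdiagonal $(\delta_1,\delta_2,\dots)$, and if $D_k(w)$ denotes the characteristic polynomial of its top-left $k\times k$ block, then cofactor expansion along the last row yields
$$
D_{k+1}(w) = (w-\gamma_k)\,D_k(w) - \delta_k\,D_{k-1}(w),\qquad k\geq 1,
$$
with initial data $D_0=1$ and $D_1(w)=w-\gamma_0$. All four identities will reduce to matching this recurrence and its initial conditions against \eqref{32a} or \eqref{32b}.

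Applied to $J_n$, whose diagonal is $(c_0,\dots,c_{n-1})$ with $c_0=-r_1$ and whose subdiagonal is $(d_1,\dots,d_{n-1})$, this produces exactly the recurrence written for $Q_k$ in \eqref{32a}; since $D_0=Q_0=1$ and $D_1(w)=w+r_1=Q_1(w)$, induction gives $Q_n(w)=\det(wI_n-J_n)$. For $J'_{n-1}$, whose diagonal starts at $c_1$, the same argument combined with the index shift $k\mapsto k-1$ recovers the $P$-recurrence in \eqref{32a}, yielding $\det(wI_{n-1}-J'_{n-1})=P_n(w)$. The two tilde statements are handled by the same template: the choice $\tilde c_0=-r_1-r_2$ in $\widetilde J_n$ produces the initial value $w+r_1+r_2=\widetilde Q_1(w)$, and the modified top-left entry $\tilde c'_0=-r_2$ in $\widetilde J'_n$ produces $w+r_2=\widetilde P_1(w)$; from the second row onwards $\widetilde J_n$ and $\widetilde J'_n$ share the same diagonal and subdiagonal, so both sequences propagate under the common recurrence \eqref{32b}.

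The resolvent formulas then follow by Cramer's rule: for any invertible $n\times n$ matrix $A$ one has $\mathbf{e}_0^*A^{-1}\mathbf{e}_0=\det(A^{(0,0)})/\det(A)$, where $A^{(0,0)}$ denotes $A$ with its first row and first column removed. Choosing $A=wI_n-J_n$, the matrix $A^{(0,0)}$ is precisely $wI_{n-1}-J'_{n-1}$ by the very definition of $J'_{n-1}$, so the first resolvent ratio equals $P_n/Q_n$. For the tilde pair the key observation is that $\widetilde J_n$ and $\widetilde J'_n$ differ only in the single $(0,0)$ entry, so deleting the first row and column produces \emph{the same} $(n-1)\times(n-1)$ matrix $M$; consequently
$$
\frac{\mathbf{e}_0^*(wI_n-\widetilde J_n)^{-1}\mathbf{e}_0}{\mathbf{e}_0^*(wI_n-\widetilde J'_n)^{-1}\mathbf{e}_0}
=\frac{\det(wI_{n-1}-M)/\widetilde Q_n(w)}{\det(wI_{n-1}-M)/\widetilde P_n(w)}
=\frac{\widetilde P_n(w)}{\widetilde Q_n(w)}.
$$
No real obstacle is expected; the argument is essentially indexing bookkeeping. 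The only points requiring care are the shift relating $J'_{n-1}$ with $P_n$ and the distinction between $\tilde c_0$ and $\tilde c'_0$ when reading off the initial data of the tilde recurrences.
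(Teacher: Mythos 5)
Your proposal is correct and follows essentially the same route as the paper: a Laplace (cofactor) expansion yielding the three-term recurrence for the characteristic polynomials of the leading principal submatrices, matched against \eqref{32a} and \eqref{32b}, followed by the adjugate/Cramer formula for the resolvent entries. You merely spell out the details (the index shift for $P_n$ and the shared principal submatrix of $\widetilde J_n$ and $\widetilde J'_n$) that the paper dismisses as ``similar.''
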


\begin{proof}
	 	Let
	$$
	j_{n}(w):=\det(wI_{n+1}-J_{n+1}).
	$$
 Note that by the Laplace expansion  the following recurrence holds
	 \begin{equation}
j_n(w) = (w-c_{n-1})j_{n-1}-d_{n-1} j_{n-2},\quad n\geq 1,
 \end{equation} 
where the initial conditions are given by	 
	 $$
	 j_{-1}(w):=0,\quad j_0(w): = 1.
	 $$
Comparing this with \eqref{32b} gives the statement concerning $Q_n(w)$. The statements concerning $P_n(w)$, $\widetilde{P}_n(w)$, $\widetilde{Q}_n(w)$ can be shown in a similar way. The proof of the 'Consequently' part follows directly by the adjugate matrix formula for the inverse.
 
\end{proof}
A standard diagonalization for the $J_n$ matrix is the following. 
\begin{lemma}\label{diagonalization}
Let $n\geq 1$ and let $s_0,\dots,s_{2n-1}\in\Comp$ be such that the zeros $z_0^n,\dots z_{n-1}^n$ of  $Q_{n}(w)$ are simple. Then the matrix
$$
T_n=\begin{bmatrix}T_n[i,j]\end{bmatrix}_{ij=0}^{n-1}=\begin{bmatrix}Q_{i}(z_j^n)\end{bmatrix}_{ij=0}^{n-1}
$$ 
is the diagonalisation  of $J_{n}$, i.e.
 $$
 J_{n}=T_n \diag(z_0^n,\dots z_{n-1}^n) T_n^{-1}.
 $$
 Similarly, if the zeros $\tilde z_0^n,\dots \tilde z_{n-1}^n$ of $\widetilde Q_n(w)$ are simple then $\widetilde T_n=[\widetilde Q_{i}(z_j^n)]_{ij=0}^{n-1}$ is the diagonalisation    of $\widetilde J_{n}$.
\end{lemma}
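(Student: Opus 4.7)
The plan is to exhibit the columns of $T_n$ as eigenvectors of $J_n$ directly from the three-term recurrence \eqref{32a}, and then invoke simplicity of the zeros to conclude invertibility of $T_n$. Concretely, for an arbitrary $w\in\Comp$ I set $v(w)=(Q_0(w),Q_1(w),\dots,Q_{n-1}(w))^T$ and compute $(J_n-wI_n)v(w)$ componentwise.

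First I would check the top row: using $c_0=-r_1$ and $Q_1(w)=w+r_1$, the equation $c_0Q_0(w)+Q_1(w)=wQ_0(w)$ holds by direct substitution. For an interior row $l\in\{1,\dots,n-2\}$ the identity $d_lQ_{l-1}(w)+c_lQ_l(w)+Q_{l+1}(w)=wQ_l(w)$ is just a rearrangement of the recurrence \eqref{32a}. For the last row there is no $Q_n$ term available in $J_n v(w)$, so one obtains a defect exactly of size $Q_n(w)$; in other words, $(J_n-wI_n)v(w)=-Q_n(w)\mathbf{e}_{n-1}$. Substituting $w=z_j^n$, a zero of $Q_n$, eliminates the defect and produces $J_n v(z_j^n)=z_j^n v(z_j^n)$, which is exactly the statement that the $j$-th column of $T_n$ is an eigenvector of $J_n$ with eigenvalue $z_j^n$.

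Next I would argue that $T_n$ is invertible. Each column is nonzero since its top entry is $Q_0(z_j^n)=1$. The assumption that the zeros of $Q_n$ are simple means that $Q_n$ has $n$ pairwise distinct roots, so these eigenvectors correspond to $n$ distinct eigenvalues of $J_n$ and are therefore linearly independent. Hence $T_n$ is invertible and $J_n T_n = T_n\diag(z_0^n,\dots,z_{n-1}^n)$, which yields the desired factorisation.

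The argument for $\widetilde J_n$ is identical in structure: one uses the recurrence \eqref{32b} together with the boundary identity $\tilde c_0\cdot 1+\widetilde Q_1(w)=\tilde c_0+(w+r_1+r_2)=w$, which again holds by the definition $\tilde c_0=-r_1-r_2$, and then concludes invertibility from simplicity of the zeros of $\widetilde Q_n$. The only delicate point worth double-checking is the top-row boundary condition in both cases, since this is the place where the definitions of $c_0$ and $\tilde c_0$ (not covered by the generic formula \eqref{777}) enter; everything else is a mechanical reading of the three-term recurrence against the tridiagonal action of $J_n$.
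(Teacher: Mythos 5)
Your proof is correct and follows essentially the same route as the paper's, which likewise applies $(z_j^nI_n-J_n)$ to the vector $\bigl(Q_0(z_j^n),\dots,Q_{n-1}(z_j^n)\bigr)^\top$ and observes that only the last component, equal to $Q_n(z_j^n)=0$, survives. Your write-up is in fact slightly more careful than the paper's one-line argument, since you explicitly verify the top-row boundary case involving $c_0=-r_1$ (resp.\ $\tilde c_0=-r_1-r_2$) and spell out why simplicity of the zeros gives invertibility of $T_n$.
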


\begin{proof}
Note that by \eqref{32a} we have
 $$
   (z_j^nI_{n}- J_{n})\begin{bmatrix}
	 	Q_0(z_j^n)\\
	 	\vdots \\
		Q_{n-2}(z_j^n)\\	 
	 	Q_{n-1}(z_j^n)\\	 	
	 	\end{bmatrix}=\begin{bmatrix}
	 	0\\
	 	\vdots \\
	 	0\\
		Q_{n}(z_j^n)	 	
	 	\end{bmatrix}=
	 	0,
$$	 
i.e. the vectors $[Q_0(z_j^n),\dots,Q_{n-1}(z_j^n)]^\top$ $(j=1,\dots n)$ constitute the eigenvector basis for $J_{n}$.
 The statement concerning $\widetilde J_n$ can be proved similarly.
\end{proof}

 Key point of the above lemma is the assumption that the zeros of $Q_n(w)$ are simple. We show now this is a generic property. More precisely, by a generic subset of $\Comp^k$ we mean a set, whose complement is a subset of a proper algebraic subset of $\Comp^k$. 
Due to the presence of noise any realisation of the signal lies almost surely, under mild assumptions, in a given generic set. For formulating the statement recall the following classical formulas (cf. \cite{jacobi1846}):  
\begin{align}
Q_{n}(w)& = \det\left[{\begin{array}{cccc} 
    s_n & s_{n-1} & \cdots & s_0 \\
    s_{n+1} & s_n & \cdots & s_1 \\
     \cdot & \cdot & \cdot & \cdot \\
     \cdot & \cdot & \cdot & \cdot \\
     \cdot & \cdot & \cdot & \cdot \\
    s_{2n-1} &   s_{2n-2} & \cdots & s_{n-1}  \label{JacQ}\\
    1 & w & \cdots & w^{n}
  \end{array}}\right],\\
\widetilde{Q}_{n}(w)& = \det\left[{\begin{array}{cccc} 
    s_{n+1} & s_{n} & \cdots & s_1 \\
    s_{n+2} & s_{n+1} & \cdots & s_2 \\
     \cdot & \cdot & \cdot & \cdot \\
     \cdot & \cdot & \cdot & \cdot \\
     \cdot & \cdot & \cdot & \cdot \\
    s_{2n} &   s_{2n-1} & \cdots & s_{n} \\
    1 & w & \cdots & w^{n}
  \end{array}}\right].\label{JacQt}
\end{align}
In particular, $\widetilde Q_n(w)$ depends on $s_1,\dots,s_{2n}$ only.

\begin{theorem}\label{simplezeros} 
Let $n\geq 1$. The zeros of the polynomials $Q_n(w)$  $($respectively, $\widetilde Q_n(w))$ are simple for all $\mathbf s=[s_0,\dots,s_{2n-1}]^\top$ $( \mathbf s=[s_1,\dots,s_{2n}]^\top )$ in some generic subset of $\Comp^{2n}$. In particular, if  $\mathbf s$ is a random vector that has a continuous distribution on $\Comp^{2n}$, then the zeros of $Q_n(w)$ $($of $\widetilde Q_n(w))$ are almost surely simple. 
\end{theorem}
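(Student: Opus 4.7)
The plan is to reduce ``zeros of $Q_n$ are simple'' to a Zariski-open condition on $\mathbf{s}\in\Comp^{2n}$ and then exhibit a single $\mathbf{s}$ inside that open set; genericity of the complement is then automatic.

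First, I would read off the Jacobi representation \eqref{JacQ}: expanding along the last row displays $Q_n(w)$ as a polynomial in $w$ whose coefficient of $w^j$ is a signed $n\times n$ minor of the Hankel block, with leading coefficient $H_n(\mathbf{s}):=\det[s_{i+j}]_{i,j=0}^{n-1}$. Call this polynomial $\widehat Q_n(w,\mathbf{s})$; on the open set $\{H_n\neq 0\}$ the monic $Q_n(w)$ equals $\widehat Q_n(w,\mathbf{s})/H_n(\mathbf{s})$ and shares its zeros. The $w$-discriminant $\Delta_n(\mathbf{s}):=\mathrm{Disc}_w\widehat Q_n$ is a polynomial in $\mathbf{s}$, and off the algebraic set
\[
\Sigma_n:=\{\mathbf{s}\in\Comp^{2n}:H_n(\mathbf{s})\Delta_n(\mathbf{s})=0\}
\]
the polynomial $\widehat Q_n(\cdot,\mathbf{s})$ has degree exactly $n$ with $n$ distinct roots.

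The real content is to verify that $\Sigma_n$ is a \emph{proper} algebraic subset, i.e.\ $H_n\Delta_n\not\equiv 0$. I would pick $n$ distinct nonzero $z_1,\dots,z_n\in\Comp$ and nonzero $A_1,\dots,A_n$, set $s_k:=\sum_{p=1}^n A_p z_p^k$ for $0\le k\le 2n-1$, and observe that $Z(w)=\sum_p A_p w/(w-z_p)$ is already rational of type $[n-1/n]$. By uniqueness in Theorem \ref{generalPade1}, $Z$ equals its own Pad\'e approximant, so by Theorem \ref{PQdet} the denominator is $Q_n(w)=\prod_p(w-z_p)$, whose zeros are simple. Moreover $H_n(\mathbf{s})\neq 0$ because the Hankel matrix factors as $V\,\mathrm{diag}(A_p)\,V^{\top}$ with $V=[z_p^i]$ a Vandermonde of distinct nodes. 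This single point witnesses $H_n\Delta_n\not\equiv 0$.

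For $\widetilde Q_n(w)$ the same argument works almost verbatim: use \eqref{JacQt} and a type-$[n/n]$ rational function such as $Z(w)=A_0+\sum_{p=1}^n A_p w/(w-z_p)$; the only bookkeeping change is that $\widetilde Q_n$ depends on $s_1,\dots,s_{2n}$ rather than $s_0,\dots,s_{2n-1}$, as already noted right after \eqref{JacQt}. The almost-sure conclusion for continuous distributions follows because a proper algebraic subset of $\Comp^{2n}\cong\Real^{4n}$ has Lebesgue measure zero. I do not foresee a serious obstacle; the only step needing a little care is confirming that the Pad\'e approximant of the witness $Z$ is indeed $Z$ itself with denominator $\prod_p(w-z_p)$, which is just Pad\'e uniqueness combined with $H_n\neq 0$ guaranteeing that the approximant has full degree $n$.
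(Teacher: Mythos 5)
Your proof is correct and follows essentially the same route as the paper's: both reduce simplicity of the zeros to the non-vanishing of a polynomial in $\mathbf{s}$ (you via the $w$-discriminant together with the leading Hankel minor $H_n$, the paper via $\det S(Q_n,Q_n')$ for the Sylvester resultant matrix) and then exhibit a single witness point. The only substantive difference is the witness: the paper takes $s_j=\delta_{j,0}+\delta_{j,n}$, for which \eqref{JacQ} collapses directly to $Q_n(w)=w^n-1$, whereas your sum-of-exponentials witness needs the detour through Pad\'e uniqueness and the Vandermonde factorization of the Hankel block --- though you are slightly more careful than the paper in ruling out a degree drop via $H_n\neq 0$.
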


\begin{proof}
We show only the part concerning $Q_n(w)$, the proof for $\widetilde Q_n(w)$ is similar.  Due to \eqref{JacQ} $Q_n(w)$ can be written as 
$$
Q_n(w)= \sum_{j=0}^n p_j(s_0,\dots,s_{2n-1}) w^j,
$$
where $p_j$ is some polynomial in multiple variables. The polynomial $Q_n(w)$ has a double zero if and only if the Sylvester resultant matrix  $S(Q_n,Q_n')$ is singular \cite{holtz2012structured,sylvester1840xxiii}. Note that 
$$
p(s_0,\dots,s_{2n-1}):=\det(S(Q_n,Q_n'))
$$
 is a polynomial in $s_0,\dots,s_{2n-1}$. Therefore, the set of all $s_0,\dots ,s_{2n-1}$ for which the zeros of $Q_n(z)$ are not simple  is an algebraic subset of $\Comp^{2n}$. To show that it is a proper algebraic subset we need to show that
  $p$ is a nonzero polynomial. Observe that  for $s_j=\delta_{j,0}+\delta_{j,n}$ ($j=0,\dots,2n-1$) we get
$Q_n(w)= w^n-1$, which has no double roots. In consequence  $p(s_0,\dots, s_{2n-1})\neq 0$. 
\end{proof}

\begin{remark}\label{Hankel}
Theorem \ref{PQdet} gives a direct way of computing the zeros of $Q_n(z)$. However, note that the zeros of $Q_n(z)$ can also be computed using different methods. In particular, it is known  that they are eigenvalues of the linear pencil
$$
U_0 - \lambda U_1,
$$
where $U_0,U_1\in\Comp^{n+1}$ are Hankel matrices defined by
$$
U_0[i,j] = s_{i+j+1},\quad U_1[i,j]=s_{i+j},\quad i,j=0,\dots, n,
$$
as $\det(U_0-zU_1)=Q_n(z)$, see \cite{barone2008new}.
\end{remark}

We postpone the comparison of the numerical results of the two methods to Section \ref{comp2} and conclude the present section by  formulating a purely linear algebraic corollary, announced already in the Introduction.

\begin{corollary}\label{cor:linalg}
Consider the Hankel pencil $U_0-zU_1$ defined above. Then for all $s_0,\dots,s_{2n-1}\in\Comp$ except a proper semialgebraic subset of $\Comp^{2n}$ its eigenvalues are simple and in such case
the matrix $J_n$ constructed above is a companion matrix of the pencil, in the sense that the spectra of $J_n$ and $U_0-zU_1$ coincide. 

\end{corollary}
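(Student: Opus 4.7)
The plan is to combine three facts that have already been established. First, Remark~\ref{Hankel} (following \cite{barone2008new}) gives $\det(U_0-zU_1)=Q_n(z)$, so the eigenvalues of the Hankel pencil are, as a multiset, precisely the zeros of $Q_n(w)$, with algebraic multiplicities equal to their multiplicities as roots. Second, Theorem~\ref{PQdet} identifies $Q_n(w)$ with $\det(wI_n-J_n)$, so the spectrum of $J_n$ is likewise the multiset of zeros of $Q_n$. Consequently, whenever both objects are defined and $Q_n$ has only simple zeros, the spectra of $J_n$ and of the pencil $U_0-zU_1$ coincide.

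For the genericity statement, Theorem~\ref{simplezeros} supplies a proper algebraic subset $V_1\subseteq\Comp^{2n}$ such that for $[s_0,\dots,s_{2n-1}]^\top\notin V_1$ the polynomial $Q_n(w)$ has $n$ distinct simple zeros. On the complement of $V_1$, the pencil and $J_n$ both have $n$ simple eigenvalues, which by the previous paragraph must coincide.

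The one loose end is the fact that the matrix $J_n$ constructed in Section~\ref{matrixform} is defined through the rational recursion of Proposition~\ref{ph} together with \eqref{777}, whose denominators --- essentially the quantities $f_{l-2}^0$ and $h_{l-1}^1$ --- must not vanish. Each of these is a polynomial in $s_0,\dots,s_{2n-1}$, so the exceptional locus $V_2$ on which the construction of $J_n$ breaks down is itself an algebraic subset of $\Comp^{2n}$. This $V_2$ is proper because the explicit witness $s_j=\delta_{j,0}+\delta_{j,n}$ already used in the proof of Theorem~\ref{simplezeros} (or a small perturbation of it) lies outside $V_2$. Taking the union $V_1\cup V_2$ as the excluded semialgebraic set then finishes the proof.

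The main point requiring care is the verification that $V_2$ is proper, i.e.\ exhibiting at least one sequence $\mathbf{s}$ for which the recursion for the $h_l^k$ goes through without division by zero. Once this routine check is in place, the corollary reduces to bookkeeping of characteristic polynomials, since all nontrivial content is already contained in Theorem~\ref{PQdet}, Theorem~\ref{simplezeros}, and Remark~\ref{Hankel}.
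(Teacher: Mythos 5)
Your proposal follows exactly the route the paper intends: the corollary is stated without a separate proof precisely because it is the conjunction of $\det(U_0-zU_1)=Q_n(z)$ (Remark~\ref{Hankel}), $Q_n(w)=\det(wI_n-J_n)$ (Theorem~\ref{PQdet}), and the genericity of simple zeros (Theorem~\ref{simplezeros}), and you assemble these in the same way. You also correctly identify the one point the paper glosses over, namely that the recursive construction of $J_n$ only makes sense off an additional algebraic set $V_2$ where the denominators $f_{l-2}^0$ and $h_{l-1}^1$ (and $s_0$, $s_1$) are nonzero.

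The only flaw is in your treatment of that very point. The witness $s_j=\delta_{j,0}+\delta_{j,n}$ does \emph{not} lie outside $V_2$: it has $s_1=0$ for $n\geq 2$, which already violates the hypothesis $s_0,s_1\neq 0$ of Proposition~\ref{ph}, so the recursion for the $h_l^k$ breaks down at the first step. The fallback ``or a small perturbation of it'' is circular as stated: a small perturbation is guaranteed to escape $V_2$ only once you already know $V_2$ is proper, i.e.\ that each denominator is a not-identically-zero polynomial in $s_0,\dots,s_{2n-1}$. A correct witness is easy to produce --- for instance a moment sequence $s_k=\int t^k\,d\mu(t)$ of a positive measure with infinite support makes all the relevant Hankel determinants (hence all the $f_l^0$) nonzero, or one can invoke the fraction-free construction of \cite{beckermann2000fraction} to see that the denominators are nontrivial polynomials --- but as written your argument for the properness of $V_2$ does not close.
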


\section{ The  residues}\label{residues}
In the last two sections of the paper we shall  concentrate on the $[n-1/n]$ Pad\'e  approximation.
And so let $z^n_i$ ($i= 0,\dots,n-1$) 
denote the zeros of $Q_n(w)$ and $\lambda^n_i$ ($i=1,\dots,n-1$) the zeros of $P_n(w)$.
 Our aim is to find a numerically effective method to calculate the residues of the rational function 
 $$
 Z_{[n-1/n]}(w)={s_0wP_{n}\left(w\right)}/{Q_{n}\left(w\right)}.
 $$
 Note that the leading terms of $P_n(w)$ and $Q_n(w)$ equal one, due to \eqref{PQdef} and that $\hat A_{2n-1}(0)=\hat B_{2n-1}(0)=1$. Consequently,  
\begin{equation}\label{Zn}
 	 	\frac{s_0wP_{n}\left(w\right)}{Q_{n}\left(w\right)}=s_0w\frac{\prod_{i=1}^{n-1}(w-\lambda^n_i)}{\prod_{i=1}^{n}(w-z^n_i)}.
\end{equation}
 Different formulas are available; here we list six of them. The first three ones immediately stem from the sections above:

 \begin{theorem}\label{resid}
Let $n\geq 1$ and assume that $Q_n(w)$ has simple zeros only. Then the residues $\rho_j^n$ of $Z_{[n-1/n]}(w)$ corresponding to the poles $z_j^n$ $(j=0,\dots,n-1)$ are given by
the following three formulas:
\begin{equation}\label{rho1}
		\rho_j^n=s_0z_j^n\frac{\prod_{i=0}^{n-2}(z_j^n-\lambda^n_i)}{\prod_{\substack{i=0\\ i\ne j}}^{n-1}(z_j^n-z_i^n)};
 	 	\end{equation}
\begin{equation}\label{rho3}
\rho_j^n=s_0z_j^n \frac{P_n(z_j^n)}{Q_n'(z_j^n)};
 \end{equation}
\begin{equation}\label{rho2}
 	 	\rho_j^n= s_0 z_j^n T_n^{-1}[j,0];
 \end{equation}
where 
$T_n$  is the diagonalization  of $J_{n}$ as in Lemma \ref{diagonalization}.
 \end{theorem}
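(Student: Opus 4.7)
All three identities are evaluations of the same residue, so the plan is to compute $\rho_j^n = \lim_{w\to z_j^n}(w-z_j^n)Z_{[n-1/n]}(w)$ in three different ways, each of which expresses $P_n/Q_n$ in a different form.

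For \eqref{rho1} I would start from the product representation \eqref{Zn}. Since $Q_n$ has only simple zeros by assumption, the factor $(w-z_j^n)$ in the denominator cancels cleanly in the limit, and the remaining factors evaluated at $w=z_j^n$ give exactly the right-hand side of \eqref{rho1}. For \eqref{rho3} I would appeal to the standard formula $\operatorname{Res}_{w=z_j^n}\frac{f(w)}{g(w)}=\frac{f(z_j^n)}{g'(z_j^n)}$ valid when $g$ has a simple zero, applied with $f(w)=s_0 w P_n(w)$ and $g(w)=Q_n(w)$. This is essentially a one-line L'H\^opital computation on $(w-z_j^n)s_0wP_n(w)/Q_n(w)$.

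Formula \eqref{rho2} is the more interesting one and is where I would spend most of the write-up. The idea is to combine Theorem \ref{PQdet} with the diagonalization from Lemma \ref{diagonalization}. Writing $J_n = T_n D T_n^{-1}$ with $D=\diag(z_0^n,\dots,z_{n-1}^n)$, I would compute
\begin{equation*}
\frac{P_n(w)}{Q_n(w)} = \mathbf{e}_0^{*}(wI_n-J_n)^{-1}\mathbf{e}_0 = \mathbf{e}_0^{*}T_n(wI_n-D)^{-1}T_n^{-1}\mathbf{e}_0 = \sum_{j=0}^{n-1}\frac{T_n[0,j]\,T_n^{-1}[j,0]}{w-z_j^n}.
\end{equation*}
The key observation is that $T_n[0,j]=Q_0(z_j^n)=1$ by the initial condition in \eqref{32a}, so the partial fraction expansion simplifies to
\begin{equation*}
\frac{P_n(w)}{Q_n(w)} = \sum_{j=0}^{n-1}\frac{T_n^{-1}[j,0]}{w-z_j^n}.
\end{equation*}
Multiplying by $s_0 w$ and taking the residue at $z_j^n$ yields $\rho_j^n = s_0 z_j^n T_n^{-1}[j,0]$.

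I do not expect any serious obstacle: formulas \eqref{rho1} and \eqref{rho3} are almost immediate from \eqref{Zn}, and \eqref{rho2} only requires the spectral decomposition together with the identification $Q_0\equiv 1$. The one point worth stating explicitly is why Lemma \ref{diagonalization} is applicable, namely that the hypothesis of simple zeros of $Q_n$ is shared between the present theorem and that lemma.
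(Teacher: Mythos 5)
Your proposal is correct and follows the same route as the paper: \eqref{rho1} and \eqref{rho3} read off directly from the definition of the residue applied to \eqref{Zn}, and \eqref{rho2} comes from expanding $\mathbf{e}_0^*(wI_n-J_n)^{-1}\mathbf{e}_0$ via the diagonalization of Lemma \ref{diagonalization} together with Theorem \ref{PQdet} and the observation $T_n[0,j]=Q_0(z_j^n)=1$. No gaps.
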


 \begin{proof} Equations \eqref{rho1} and \eqref{rho3} follow directly from the definition of the residue of a rational function. To see equation \eqref{rho2} note that 
\begin{eqnarray*}
 \mathbf{e}_0^* \left(wI_n-J_{n}\right)^{-1}\mathbf{e}_0 &=& \mathbf{e}_0^* T_n \diag( w- z_0^n,\dots w-z_{n-1}^n)^{-1} T_n^{-1} \mathbf{e}_0\\
 & =& \sum_{j=0}^{n-1}  \mathbf e_0^* T_n \mathbf e_j\mathbf e_j^* T_n^{-1} \mathbf e_0(w-z_j^n)^{-1}
\end{eqnarray*}
and it is enough to use Theorem \ref{PQdet}  and note that by definition 
$$
\mathbf{e}_0^* T_n \mathbf e_j=T_n[0,j]=Q_0(z^n_j)=1.
 $$
 \end{proof}

 \begin{remark}
  As the factors in  \eqref{rho1} can be very small, in its numerical implementation we first calculated $\log\rho_j^n$ as an appropriate sum. The values $Q_l'(z_j^n)$ in \eqref{rho3} can be calculated recursively by differentiating \eqref{32b}:
 \begin{equation}\label{32b'}
   \begin{gathered}
   Q'_{0}(z_j^n)=0, \qquad Q'_1(z_j^n)=1\\
   Q'_{l+1}(z_j^n)=(z_j^n-c_l)Q'_l(z_j^n)+Q_l(z_j^n)-d_lQ'_{l-1}(z_j^n),\quad l\geq 1. \\   
   \end{gathered}
   \end{equation}
 \end{remark}
 
A fourth method to calculate the residues of a Pad\'e approximation --by far the most accurate numerically, as we shall see in Subsection \ref{comp2}-- is based on the  following proposition.

\begin{proposition}\label{vanresid}
Let $\mathbf{s}=[s_0,\dots,s_{2n-1}]^\top$ be the vector of the given signal $s$ and let $V\in\Comp^{2n\times n}$ be the matrix with elements $V[k,j]= (z^n_j)^{k-1}$ $(k=0,\dots 2n-1$,  $j=0,\dots,n-1)$. Then the overdetermined  consistent system of linear equations 
\begin{equation}\label{van}
\mathbf{s}=V{\mathbf x}
\end{equation}
has precisely one solution $\mathbf x={\boldsymbol\rho}=[\rho_0^n,\dots,\rho^n_{n-1}]^\top$. 
\end{proposition}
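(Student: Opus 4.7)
The plan is to read off the consistency of the system directly from the two defining properties of the Pad\'e approximant: its behaviour at $w=\infty$ (matching the Laurent series of $Z(w)$ up to order $w^{-(2n-1)}$) and its behaviour at $w=0$ (where it vanishes because of the factor $w$ in the numerator). Uniqueness will then follow from a standard Vandermonde argument.

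First I would fix the partial-fraction decomposition
\[
Z_{[n-1/n]}(w)=s_0w\,\frac{P_n(w)}{Q_n(w)}=s_0+\sum_{j=0}^{n-1}\frac{\rho_j^n}{w-z_j^n},
\]
which is valid because $Q_n$ has simple zeros (hence the $z_j^n$ are pairwise distinct) and because the rational function tends to $s_0$ as $w\to\infty$, see \eqref{Zn}. Expanding each simple pole as a geometric series around infinity,
\[
\frac{1}{w-z_j^n}=\sum_{k=1}^{\infty}(z_j^n)^{k-1}w^{-k},
\]
the Laurent expansion of $Z_{[n-1/n]}(w)$ at infinity reads
\[
Z_{[n-1/n]}(w)=s_0+\sum_{k=1}^{\infty}\Bigl(\sum_{j=0}^{n-1}\rho_j^n(z_j^n)^{k-1}\Bigr)w^{-k}.
\]
By the Pad\'e matching property $Z(w)-Z_{[n-1/n]}(w)=\mathcal O(w^{-2n})$, comparing coefficients up to order $w^{-(2n-1)}$ yields
\[
s_k=\sum_{j=0}^{n-1}\rho_j^n(z_j^n)^{k-1}, \qquad k=1,\dots,2n-1,
\]
which is exactly rows $k=1,\dots,2n-1$ of the system $\mathbf s=V\boldsymbol\rho$.

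The remaining row $k=0$ is the slightly subtle point. I would obtain it by evaluating the partial-fraction expansion at $w=0$: since $Z_{[n-1/n]}(0)=0$ (the factor $w$ in the numerator vanishes and $Q_n(0)\ne 0$, which also ensures $z_j^n\ne 0$ for every $j$), we get $0=s_0-\sum_j\rho_j^n/z_j^n$, i.e.\ $s_0=\sum_j(z_j^n)^{-1}\rho_j^n$, precisely the row $k=0$. This proves that $\boldsymbol\rho$ solves $\mathbf s=V\mathbf x$, so in particular the system is consistent.

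For uniqueness, I would look at the $n\times n$ submatrix of $V$ formed by the rows $k=1,\dots,n$; this matrix is the transpose of the classical Vandermonde matrix in the nodes $z_0^n,\dots,z_{n-1}^n$, and these nodes are distinct because $Q_n(w)$ has simple zeros. Hence it is nonsingular, $V$ has full column rank $n$, and the solution of $\mathbf s=V\mathbf x$ is unique. The only delicate point in the whole argument is the row $k=0$, which does not come from asymptotic matching at infinity but rather from the structural vanishing of $s_0wP_n(w)/Q_n(w)$ at the origin; once this is noticed, the rest is essentially bookkeeping.
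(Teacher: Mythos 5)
Your argument is correct and essentially the same as the paper's: both rest on the partial--fraction decomposition of $Z_{[n-1/n]}(w)$ together with matching its first $2n$ expansion coefficients against those of $Z(w)$, the only cosmetic difference being that the paper extracts all $2n$ equations (including the $k=0$ row) at once from the Taylor expansion of $\sum_{j}\frac{\rho_j^n/z_j^n}{1-z\,z_j^n}$ at $z=0$, whereas you obtain rows $k=1,\dots,2n-1$ from the Laurent matching at $w=\infty$ and recover the $k=0$ row separately from $Z_{[n-1/n]}(0)=0$ --- an algebraically equivalent piece of bookkeeping. Your uniqueness step via the nonsingular Vandermonde block (full column rank of $V$, since the $z_j^n$ are distinct) is more explicit than the paper's one-line appeal to uniqueness of the Pad\'e approximant, and is arguably the cleaner way to finish.
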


\begin{proof}
From \eqref{Zn} we have with $z=w^{-1}$ that 
\begin{equation}
Z_{[n-1/n]}(w)=s_0w \sum_{j=0}^{n-1} \frac{\rho^n_j/(s_0z_j^n)}{w-z_j^n}=
\sum_{j=0}^{n-1} \frac{\rho_j^n/z^n_j}{1 -  z\ z^n_j},
\end{equation}
which after expansion into  a  Taylor series at $z=0$   gives
\begin{equation}\label{rico}
Z_{[n-1/n]}(z)
=\sum_{k=0}^\infty \sum_{j=0}^{n-1} \rho_j^n (z^n_j)^{k-1}z^k.
\end{equation}
As $Z_{[n-1/n]}(z)$ is the $[n-1/n]$ Pad\'e approximation of the function given by  Eq. (\ref{szere}), we have
\begin{equation}\label{serie}
s_k= \sum_{j=1}^{n} \rho_j^n (z^n_j)^{k-1} \qquad k=0,\cdots,2n-1,
\end{equation}
which is exactly stating that ${\boldsymbol \rho}$ is a solution of \eqref{van}. By uniqueness of the $[n-1/n]$ Pad\'e approximation, the solution is unique.
\end{proof}

\begin{remark} We explain here some details for the numerical implementation of Proposition \ref{vanresid} above. First, we observe that extra care should be taken in the construction of the matrix $V$: especially when $n$ is large the matrix $V$ could be ill-conditioned or even not be possible to obtain due to numerical overflow. 
We therefore calculate not the  matrix $V$ but the following product:
$$
\tilde V= V D,\quad  D=\diag(d_0,\dots,d_{n-1}),\quad d_j=\begin{cases} 1 :& |z_j^n|\leq 1\\ 
|z_j^n|^{-2n+2} : & |z_j^n|>1 \end{cases}.
$$
The columns of the matrix $\tilde V$ should be built recursively. Namely, if $|z^n_j|>1$,  we start from the last entry $\tilde V[2n-1,j]=1$ and construct the column backward: $\tilde V[k-1,j]=\tilde V[k,j]/z_j^n$. The columns $j$ for which $|z^n_j|\leq 1$ are constructed directly starting from $\tilde V[0,j]=(z^n_j)^{-1}$.

Second, we note  that $\tilde{\boldsymbol\rho}:=D^{-1}\boldsymbol\rho$  is the unique solution of  the overdetermined  consistent system 
\begin{equation}\label{tildevan}
 \tilde V \tilde{\boldsymbol \rho}= {\mathbf{s}}.
 \end{equation}
However,  due to the roundoff errors appearing when calculating the poles $z_j^n$, the above system  is not consistent in practice.
 Therefore, to calculate $\tilde{\boldsymbol\rho}$  we perform a
least square evaluation of the residues by solving the normal system:
\begin{equation}
\tilde V^\top\mathbf{s}=(\tilde V^\top \tilde V)\tilde{\boldsymbol\rho}
\end{equation}
which can be easily done using the  \verb-Matlab- operator   \verb+\+ (\verb-mldivide-): $\tilde {\boldsymbol\rho}=\tilde V\backslash \mathbf{s}$. As the last step of the procedure we multiply $ {\boldsymbol\rho}=D\tilde{\boldsymbol{\rho}}$. Note that some of the residues obtained this way are hard zeros, see Figure \ref{comp}.

For sake of completeness, we mention that the residues may be also computed as a solution of the system
\begin{equation}\label{van/2}
V [k=0:n-1,j=0:n-1] \boldsymbol\rho=  [s_0,\dots,s_{n-1}]^\top,
\end{equation}
(this formula was proposed in \cite{barone2010}), or by solving any other square subsystem  of the system \eqref{van}. 
As we shall see in the next section, this method, while faster, gives slightly worse results than solving the full system \eqref{van}.
\end{remark}

 \begin{remark}  
In the $[n/n]$ case, we can again use eq. \eqref{van} to calculate the residues, but with $\mathbf{s}=[s_1,\dots,s_{2n}]^\top$ and $V[k,j]= (z^n_j)^{k-1}$ $(k=1,\dots 2n$,  $j=0,\dots,n-1)$.
\end{remark}

 The last method  of computing the residues we list, important more from a theoretical point of view than a numerical one, is given by the following theorem. 
 
\begin{theorem}\label{pert}
Let  denote $\lambda_j^n(\tau)$ the eigenvalues of 
the matrix $J_{n}+\tau\mathbf{e}_0\mathbf{e}_0^*$, defined in such way that they are continuous functions of the parameter $\tau\in\Real^+$. Then for $\tau \to  \infty$ and
 for all $s_0,\dots,s_{2n-1}$ except a set of Lebesgue measure zero on which the eigenvalues of $J_{n}$ are not simple $($see Theorem \ref{simplezeros}$)$
\begin{enumerate}
\item[(i)] $n-1$ of the curves $\lambda_j^n(\tau)$ converge to the zeros $\lambda_1^n,\dots,\lambda_{n-1}^n$ of $P_n$ and one converges to infinity as $\tau+o(\tau)$;
\item[(ii)] we have
\begin{equation}\label{rho4}
\frac{d\lambda_j^n}{d\tau}(0)= \rho_j^n s_0z_j^n.
\end{equation}
\end{enumerate} 
\end{theorem}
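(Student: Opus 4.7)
The plan is to reduce the eigenvalue problem for $J_n + \tau\mathbf{e}_0\mathbf{e}_0^*$ to the analysis of a one--parameter family of polynomials obtained from the rank--one perturbation structure, and then to pick off the asymptotic and differential information directly from that family.

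First I would apply the matrix determinant lemma (Weinstein--Aronszajn identity) to obtain
$$
\det(wI_n - J_n - \tau\mathbf{e}_0\mathbf{e}_0^*) = \det(wI_n - J_n)\bigl(1 - \tau\,\mathbf{e}_0^*(wI_n - J_n)^{-1}\mathbf{e}_0\bigr).
$$
Combined with the resolvent formula from Theorem \ref{PQdet} this yields the clean identity
$$
\det(wI_n - J_n - \tau\mathbf{e}_0\mathbf{e}_0^*) = Q_n(w) - \tau P_n(w),
$$
so that for each $\tau \ge 0$ the numbers $\lambda_j^n(\tau)$ are exactly the roots in $w$ of this pencil. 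Everything else is analysis of this secular equation.

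For part (i) I would rewrite the equation as $P_n(w) - \tfrac{1}{\tau}Q_n(w) = 0$ and let $\tau \to \infty$. Since $P_n$ has degree $n-1$, continuity of roots of a polynomial family (e.g.\ via Hurwitz's theorem) gives $n-1$ continuous branches converging to the zeros $\lambda_1^n,\dots,\lambda_{n-1}^n$. For the remaining branch, which must escape to infinity, I would substitute $w = \tau u$ in $Q_n(w) - \tau P_n(w) = 0$; using that both $P_n$ and $Q_n$ are monic with $\deg Q_n = \deg P_n + 1 = n$ by \eqref{PQdef}, the dominant balance at leading order forces $u \to 1$, i.e.\ $w = \tau + o(\tau)$. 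For part (ii), note that $\lambda_j^n(0) = z_j^n$ and that on the generic set of Theorem \ref{simplezeros} the point $z_j^n$ is a simple zero of $Q_n$. Applying the implicit function theorem to $F(w,\tau):=Q_n(w) - \tau P_n(w)$ at $(z_j^n, 0)$, and using $\partial_w F(z_j^n, 0) = Q_n'(z_j^n)\neq 0$, I would obtain
$$
\frac{d\lambda_j^n}{d\tau}(0) = \frac{P_n(z_j^n)}{Q_n'(z_j^n)},
$$
and formula \eqref{rho3} then identifies the right--hand side with the stated expression involving $\rho_j^n$, $s_0$ and $z_j^n$.

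The main obstacle I expect is the bookkeeping in part (i): ensuring that the $n$ curves $\tau \mapsto \lambda_j^n(\tau)$ can be labelled consistently on all of $\Real^+$ so that exactly $n-1$ of them accumulate at the zeros of $P_n$ while one escapes to infinity at rate $\tau$. This is essentially a connectedness argument for the algebraic curve $\{(\tau,w) : Q_n(w) = \tau P_n(w)\}$, combined with the fact that outside a finite exceptional set of $\tau$ the roots are simple and depend analytically on $\tau$. The determinant identity, the asymptotic balance, and the implicit differentiation are each short and mechanical.
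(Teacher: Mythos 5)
Your overall route is the same as the paper's: the paper also reduces everything to the secular equation $\bigl(\mathbf{e}_0^*(wI_n-J_n)^{-1}\mathbf{e}_0\bigr)^{-1}=\tau$, i.e.\ to the pencil $Q_n(w)-\tau P_n(w)$, handles (i) by a Rouch\'e-type root-counting argument (which it merely cites from the literature, whereas you spell out the Hurwitz/dominant-balance details), and obtains (ii) by differentiating the secular equation at $\tau=0$. So methodologically there is nothing new or missing in your plan for part (i).

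There is, however, a genuine problem in your last step for part (ii), and it is worth being precise about it. Your implicit-function-theorem computation correctly yields
$$
\frac{d\lambda_j^n}{d\tau}(0)=\frac{P_n(z_j^n)}{Q_n'(z_j^n)},
$$
but formula \eqref{rho3} reads $\rho_j^n=s_0z_j^n\,P_n(z_j^n)/Q_n'(z_j^n)$, so it identifies this quantity with $\rho_j^n/(s_0z_j^n)$, \emph{not} with the expression $\rho_j^n s_0 z_j^n$ asserted in \eqref{rho4}. Your closing sentence ("formula \eqref{rho3} then identifies the right-hand side with the stated expression") silently conflates the two; as written, your argument proves a different formula from the one in the theorem. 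You should either flag this as a discrepancy or track down where a factor of $(s_0z_j^n)^2$ could enter --- it cannot. In fact your honest computation exposes what appears to be an error in the paper itself: in the paper's proof, substituting the partial-fraction expansion $\sum_k\rho_k^n/(w-z_k^n)=s_0wP_n(w)/Q_n(w)$ into the secular equation gives
$$
\Bigl(\sum_{k=0}^{n-1}\frac{\rho_k^n}{\lambda_j^n(\tau)-z_k^n}\Bigr)^{-1}=\frac{\tau}{s_0\lambda_j^n(\tau)},
$$
whereas the paper writes $s_0\lambda_j^n(\tau)\tau$ on the right-hand side; differentiating the corrected identity at $\tau=0$ again gives $\rho_j^n/(s_0z_j^n)$. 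So your derivation is sound, but your conclusion should be $d\lambda_j^n/d\tau(0)=\rho_j^n/(s_0z_j^n)$, and the statement \eqref{rho4} (and the paper's proof of it) needs correcting rather than confirming.
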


\begin{proof}
The proof of statement (i) is  a standard reasoning based on the Rouche theorem, see e.g. \cite[Theorem 4.1]{RanW12}.  
Statement (ii) follows from the fact that $\lambda_j^n(\tau)$ ($\tau\geq 0$) satisfies the equation
$$
\left(\mathbf{e}_0^* (\lambda_j^n(\tau) I_n - J_{n})^{-1} \mathbf{e}_0 \right)^{-1}= \tau,
$$
which together with Theorem \ref{PQdet} gives
$$
\left(\sum_{j=0}^{n-1} \frac{\rho^n_j}{\lambda_j^n(\tau) -z^n_j}\right)^{-1}= s_0\lambda_j^n(\tau)\tau.
$$
 Differentiating both sides at $\tau=0$ we obtain  
$$
 \frac{(\lambda_j^n){}'(0)}{\rho^n_j}=  s_0 z_j^n .
$$
\end{proof}

 \begin{remark}
Although it is not stated above, one may easily show using the results  of \cite{RanW12} that the curves $\lambda_j(\tau)$ are analytic and do not intersect each other.  Therefore, Theorem \ref{pert} provides a method of joining the zeros $z_j^n$ of $Q_n(w)$ with the zeros $\lambda_j^n$ of $P_n(w)$ (an one with infinity) with analytic curves. A similar construction for the $[n/n]$ case can be obtained with the use of matrices $\widetilde J_n$ and $\widetilde J_n'$ of Theorem \ref{PQdet}. The relation of the nature these curves to the Froissart doublets is under investigation. 
  \end{remark} 

\section{Numerical tests}\label{num}

The total numerical error is a combination of four errors that appear during the procedure:
\begin{itemize}
\item calculation of the $J_n$ matrix in Section \ref{matrixform}. 
\item calculation of the eigenvalues of the $J_n$ matrix.
\item calculation of the corresponding residues.
\end{itemize}

 
The total backward or forward error is here very hard to estimate. Furthermore, one can easily show artificial  examples where each of the two algorithms for computing the poles  fails.  E.g., when $s_0$ or $s_1$  are numerical zeros, the construction of the $J_n$ matrix fails, while the pencil constructed as in  Remark \ref{Hankel} works correctly for $s_0=0$. Furthermore, the calculation of the poles via the pencil method  can fail for example in the case when the signal consists of a few (much less than one quarter of the number of data points) damped oscillations and very low noise, as in that case the pencil $U_0-\lambda U_1$ is too close to a singular pencil.   

In the current section we present a strategy for verifying the reliability of the numerical computations using the different formulas presented above. The tests provide a numerical computation of the forward and backward error. 

We here test the algorithms on data series $\mathbf{s}^n\in\Comp^{2n}$ consisting of complex Gaussian white noise $\mathbf s^n=\mathbf s^n_{1}+\text{i}\mathbf s^n_2$, where $\mathbf s^n_{1},\mathbf s^n_{2}\in\Comp^{2n}$ are normally distributed real vectors, with mean zero and variance one, generated --for each $n$ separately--by the  Matlab  {\texttt randn} function.
The reasons are threefold:
firstly, this allows us to easily identify the formulas for calculating poles and residues that in usual circumstances are unsuitable for numerical computations. Secondly, our computations for white noise can serve as a reference point in further studies on more complicated data sequences. 
Thirdly, the statistical properties of poles and zeros of the Pad\'e approximant of a pure noise data series are of interest both from the theoretical point of view \cite{barone2013universality, bessis2009universal, bessis2013noise} and from that of their of applications in signal detection \cite{perotti2012IEEE} and hence, reliable numerical methods are required for this particular case.

Our aim here is to study the stability and accuracy of the proposed methods as a function of the number of data points, by plotting the numerical error vs. $n$, which is half of the length of the above random signal. Note that  increasing the number of data points in most cases 
 increases the information on one hand, but on the other hand it also increases the numerical error. Hence, finding the proper length of the signal is always a matter of a careful study, the tools for which are provided below.

%
%
%

\subsection{Forward error of the  methods for computing the poles}\label{polesforward}

First, let us compare the computation time for the poles using the $J_n$ matrix method (Theorem \ref{PQdet}) and the pencil method (Remark \ref{Hankel}), as a function of the number of data points $2n$. The results are shown in Figure \ref{f2}: for $n=10^3$ the $J_n$ matrix method appears to be $10$ times faster, which for real time analysis might make the difference. 
The results for data sequences of various signals plus noise show the same behavior.
 The \texttt{Matlab} function \texttt{eig}  was used to compute both the eigenvalues of $J_n$ and of the linear pencil $U_0-\lambda U_1$. For the $J_n$ matrix, an ad-hoc LR routine, with origin shift to speed up convergence, would have the advantage of working with only two vectors instead than with a whole matrix \cite{parlett}.
\begin{figure}[htbp]
\includegraphics[width=300pt]{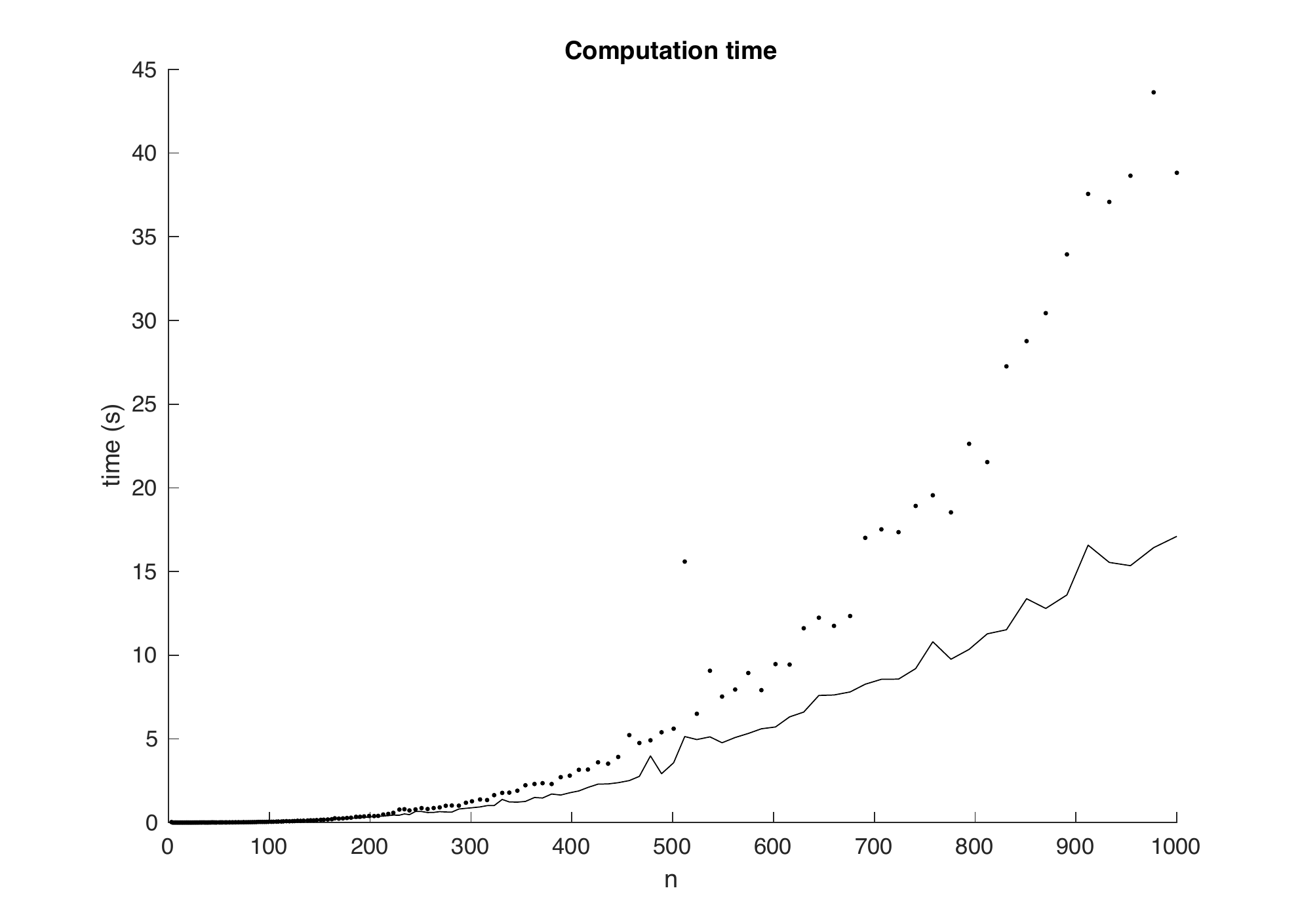}
\caption{ Computation time of the poles $\mathbf{z}^n$ for Gaussian white noise in the complex plane using the  $J_n$ matrix (solid line) and  the $U_0-\lambda U_1$ pencil (dots). }\label{f2}
\end{figure} 

We now pass to comparing the accuracy of the two methods of computing the eigenvalues by the following procedure: given a set of poles $\mathbf{z}^n=[z_0^n,\dots,z_{n-1}^n]^\top$ and the residues $\boldsymbol{\rho}^n=[\rho_0^n,\dots,\rho_{n-1}^n]^\top$, we use equation \eqref{van} to create a data sequence $\mathbf s$. 
For this $\mathbf s$ we calculate the poles $\widetilde{\mathbf z}$. Both algorithms have the property that $\widetilde{\mathbf z}$ equals $\mathbf z$  
in theory. We  set $\norm{\tilde{\mathbf z} ^n- {\mathbf z}^n }/\norm{\mathbf z^n}$ as a measure of the relative forward error in computing the poles and we calculate  it for both methods.

The results of simulations for consecutive values of $n$ can be found in Figure \ref{f2b}.  To avoid unrealistic values of poles and residues $\mathbf{z}^n$ and $\boldsymbol{\rho}^n$, we generate them from a complex Gaussian white noise sample $\hat{\mathbf s}$ using the pencil method and equation \eqref{van}, respectively. While both methods give quite small errors, a comparison of Figure \ref{f2b} with Figure \ref{f2} indicates that the advantage in speed of the $J_n$  matrix method is compensated by a loss of precision of approximately the same order of magnitude.  We note here in passing that we have observed cases of data series $\mathbf{s}$ comprising noise plus a signal of the form \eqref{oscilform}, where using the $J_n$ matrix gives a smaller error. These cases will be dealt with in a forthcoming paper.

\begin{figure}[htbp]
\includegraphics[width=300pt]{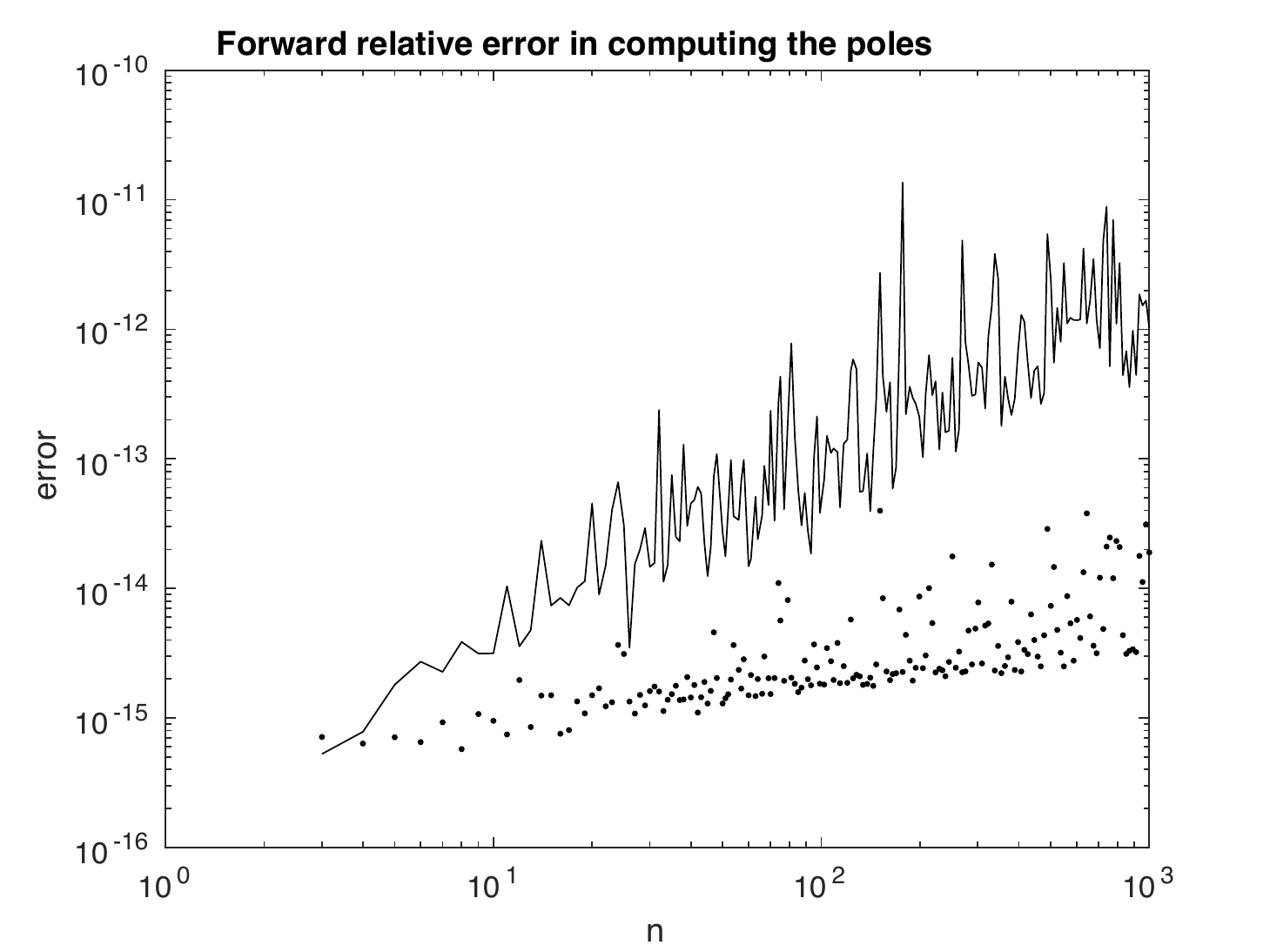}
\caption{ Forward relative error of the poles $\mathbf{z}^n$ for complex Gaussian white noise, computed using the  $J_n$ matrix (solid line) and  the $U_0-\lambda U_1$ pencil (dots).}\label{f2b}
\end{figure} 

\subsection{Forward error of the methods for computing the residues}\label{residsforward}

We adopt the same strategy to compute the forward error for the different methods to calculate the residues: the distance between the output $\widetilde{\boldsymbol \rho}^n$ of the chosen algorithm and the original residues ${\boldsymbol \rho}^n$, measured as $\norm{\widetilde{\boldsymbol \rho} ^n- {\boldsymbol \rho}^n }/\norm{\boldsymbol \rho^n}$ is a measure of the relative forward error in computing the residues. The results can be found in Figure \ref{official3}.
As only the $J_n$ method gives the zeros of the Pad\'e approximant needed for eq. \eqref{rho1} (and eq. \eqref{rho3} whose results we do not present here as they always are essentially identical to those of eq. \eqref{rho1}), the eigenvalues were computed with the $J_n$ method. Apart from the derivative formula  \eqref{rho4} all the methods appear to give comparable relative errors.
\begin{figure}[htb]
\includegraphics[width=300pt]{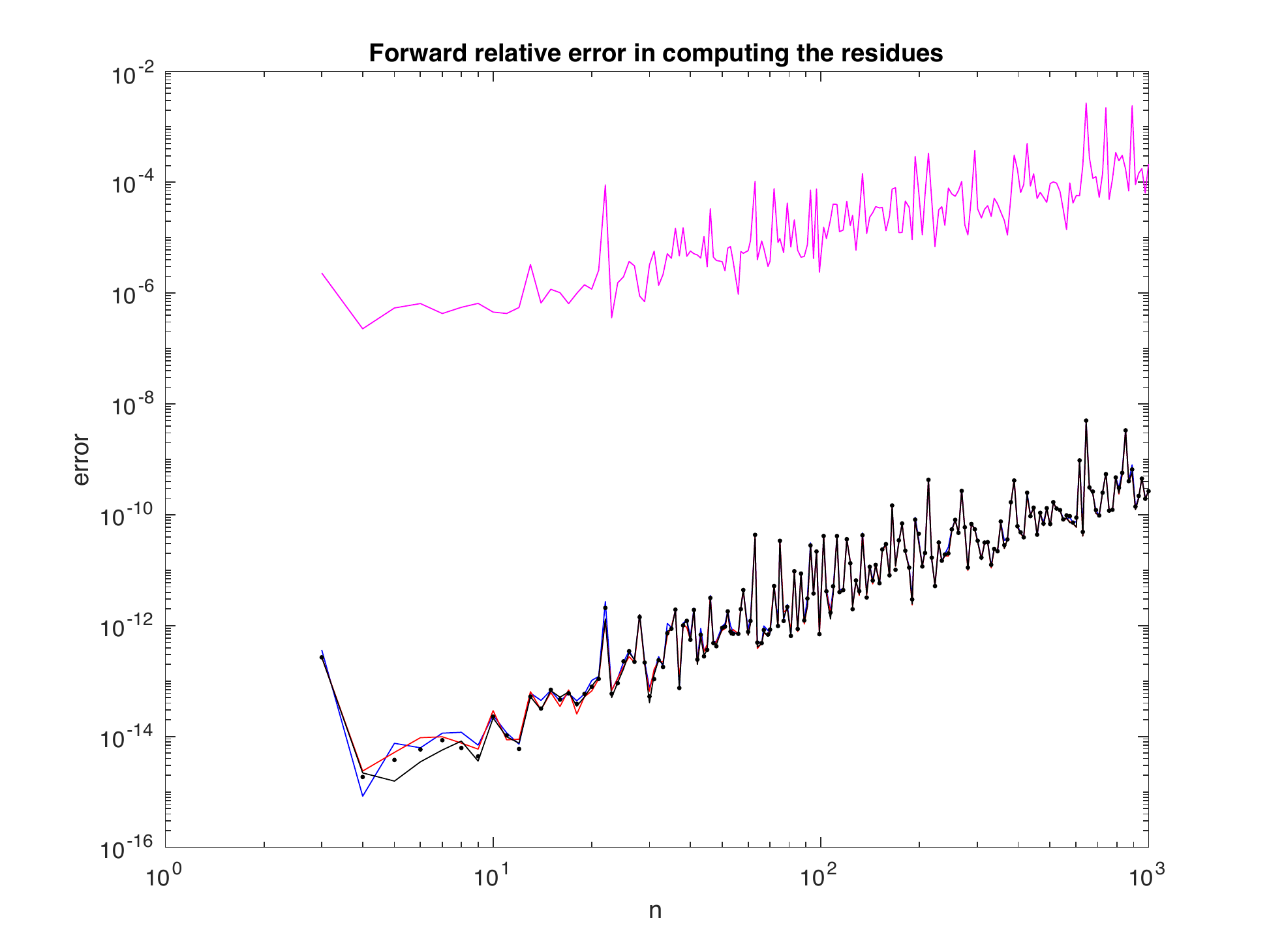}
\caption{ Forward relative error of the residues for complex Gaussian white noise, computed using the product \eqref{rho1} (red), eigenvectors of the $J_n$ matrix \eqref{rho2} (black), overdetermined Vandermonde system  \eqref{van} (blue),  square Vandermonde system \eqref{van/2} (black dots) and the derivative formula  \eqref{rho4} (magenta).}\label{official3}
\end{figure} 

\subsection{A global test for correctness of the residues computation}\label{global}

A fast test of the numerical reliability of equations \eqref{rho1},\eqref{rho3},\eqref{rho2},\eqref{van} and (\ref{rho4}) 
is based on the equality
 $$
 \sum_{j=0}^{n-1} \frac{\rho_j^n}{s_0z_j^n}=1
 $$ 
 that follows from the fact that by the Euler-Jacobi vanishing condition the sum of the residues of the fraction of two monic polynomials
\begin{equation}
\frac{\prod_{i=1}^{n-1}(w-\lambda^n_i)}{\prod_{i=0}^{n-1}(w-z^n_i)}.
\end{equation}
 equals one. The results of simulation can be found in Figure \ref{official4} and are essentially in agreement with those of Figure \ref{official3}, the Vandermonde methods \eqref{van} and \eqref{van/2} giving only slightly worse results than the  product \eqref{rho1} and $J_n$ matrix eigenvectors \eqref{rho2} methods. The eigenvalues were computed with the $J_n$ method.

 \begin{figure}[htbp]
\includegraphics[width=300pt]{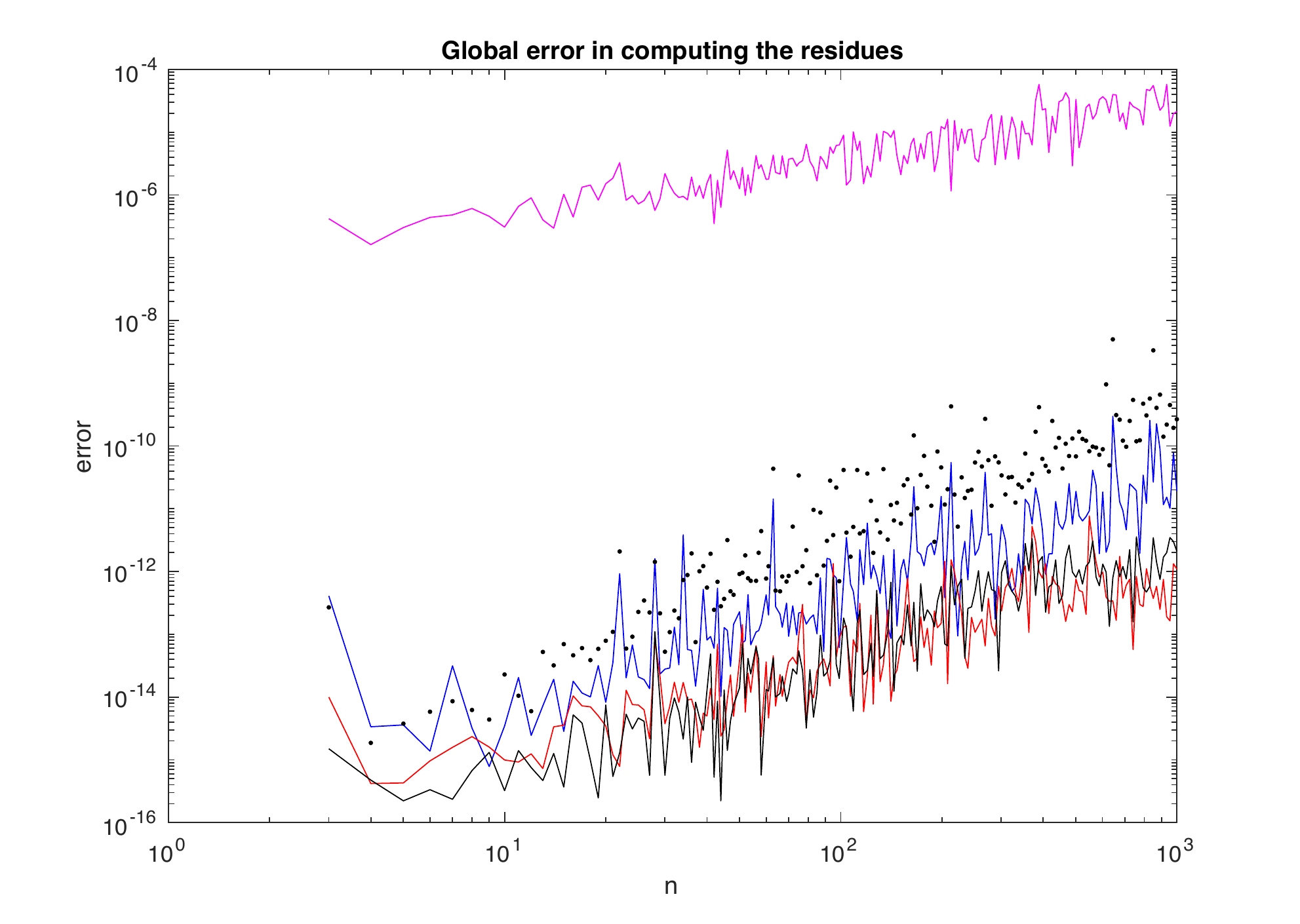}
\caption{Global error of the residues for complex Gaussian white noise, computed using the product \eqref{rho1} (red), eigenvectors of the $J_n$ matrix \eqref{rho2} (black), overdetermined Vandermonde system  \eqref{van} (blue),  square Vandermonde system \eqref{van/2} (black dots) and the derivative formula  \eqref{rho4} (magenta).}\label{official4}
\end{figure} 
 \begin{remark}
 In the $[n/n]$ case the corresponding Euler-Jacobi vanishing condition reads
  $$
 \sum_{j=0}^{n-1} \frac{\rho_j^n}{s_0}=\sum_{i=0}^{n-1}(z^n_i-\lambda^n_i).
 $$
 \end{remark}
  
\subsection{A graphical test for correctness of the residues computation}\label{comp2}
 
 In Figure \ref{comp} we plot the absolute value of the residues $|\rho_j^n|$ against the  poles' radial positions $|z_j^n|$  for complex Gaussian white noise. The residues are computed using five of the above methods (again, we did not plot the results for eq. \eqref{rho3} as they are very similar to those for eq. \eqref{rho1}).  The eigenvalues were computed with the $J_n$ method. For $|z_j^n|  \ge 1$ --where the magnitude of the residues sharply decreases--  the differences between  the Vandermonde methods and the other methods that give comparable results for the forward and global error become evident. The flattening of the magnitude of the residues on the right hand side depends on the method used and is clearly an artifact: (relatively) large residues for large  $|z_j^n|$'s cause the signal reconstructed using eq. \eqref{serie} to explode exponentially. More in detail: the amplitude of an addendum of the signal \eqref{serie} associated with a pole $z_j^n$ equals approximately $\sigma|\rho_j^n||z_j^n|^{(k-1)}$, where $\sigma$ is the noise amplitude. If $|z_j^n|<1$, the signal decays exponentially and the corresponding residue mostly affects the first terms of the series $s_k$; if instead $|z_j^n|>1$, the corresponding residue mostly affects the the last terms of ${\mathbf s}$. 
 The amplitude of an addendum of the signal associated with a pole $z_j^n$ outside the unit disc should therefore be bounded, i.e.  
 \begin{equation}\label{bdc}
 |\rho_j^n||z_j^n|^{2(n-1)}\lesssim \sigma.
 \end{equation}
  Writing  $|z_j^n|=1+\delta_j$ with $\delta_j \ll1$ we can approximate \eqref{bdc} by
   $$
   \ln|\rho_j^n|\lesssim  -2(n-1)\delta_j+\const,
   $$
    meaning that we expect the residues of poles outside the unit circle to decay exponentially with the pole's distance from the unit circle.

 From Figure \ref{comp} it clearly  appears that eq. (\ref{van}) --which follows the fall of the residues for $324$ orders of magnitude to the \texttt{Matlab} hard zero-- is the most effective numerically, while eq. (\ref{rho4}) gives the worst result. Eq. \eqref{van/2} gives results somehow similar to those of eq. \eqref{van}, but while the residues obtained using eq. \eqref{van} lie on a straight line coinciding with the bound \eqref{bdc}, the line formed by the residues calculated using eq. \eqref{van/2}  is (slightly) curved upward.
\begin{figure}[htb]
\includegraphics[width=300pt]{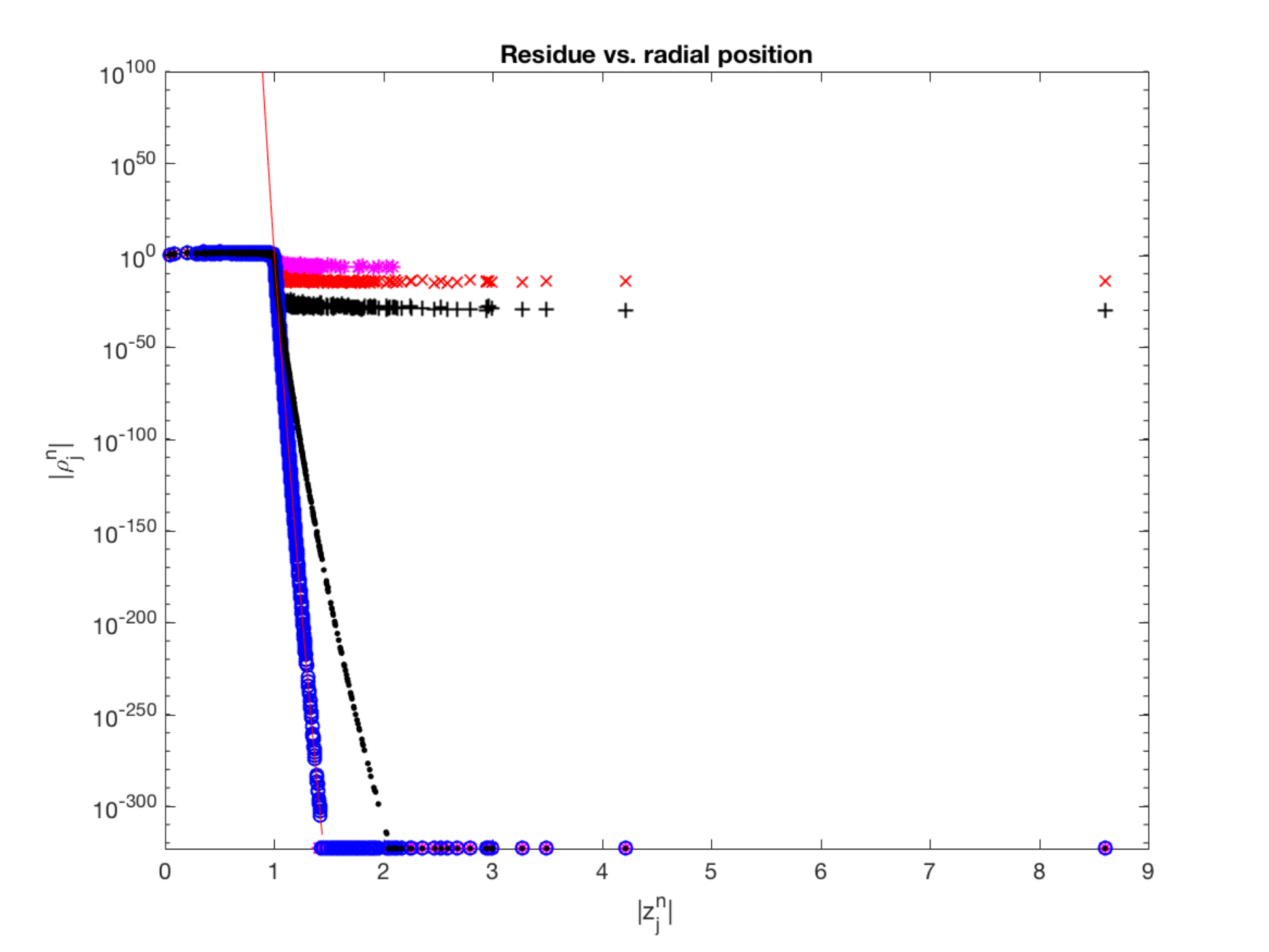}
\caption{Comparison of the magnitude of the residues for Gaussian white noise vs. radial position for $n=1000$, using \eqref{rho1} (red $\times$), \eqref{rho2} (black $+$), \eqref{van} (blue $\circ$), \eqref{van/2} (black dots) and \eqref{rho4} (magenta $*$); $200$ different realizations of the data sequence $s$ were used.  To guide the eye, eq. \eqref{bdc} is plotted as a red line. The bottom horizontal line of the residues corresponds to  $|\rho_j^n|=10^{-324}$, which is a hard zero in \texttt{Matlab}.}\label{comp}
\end{figure}

\subsection{Computing backward error by reconstructing of signal}\label{reco}

A comparison of the original data sequence $\mathbf s$ with the one reconstructed from the calculated poles and their residues by formula \eqref{serie}, denoted by $\widetilde{\mathbf s}$, can also serve as a test of the reliability of the procedure. From a numerical perspective, $\norm{\mathbf s-\widetilde{\mathbf{ s}}}/\norm{\mathbf s}$ is a measure of the backward error in computing the eigenvalues and poles. 

 The results for the four methods given by equations \eqref{rho1}, \eqref{rho2}, \eqref{van}, and \eqref{van/2} applied to complex Gaussian white noise are compared in Figure \ref{confr}. In all cases the signal was reconstructed using equation \eqref{tildevan}. The \verb-Matlab- function \verb-eig- was again used to compute poles as eigenvalues of the 
$J_n$ matrix. 
Only eq. \eqref{van} gives a good reconstruction (the error goes approximately as $3\cdot10^{-17}n^{3/2}$). Due to the explosion of the reconstruction discussed in Section \ref{comp2}, other methods of computing the residues not only give for $n=1000$ backward errors of order exceeding $10^{50}$, but for some noise realizations the reconstruction of the signal fails completely. This indicates a definite advantage of the \eqref{van} method.

\begin{figure}[htb]
\includegraphics[width=300pt]{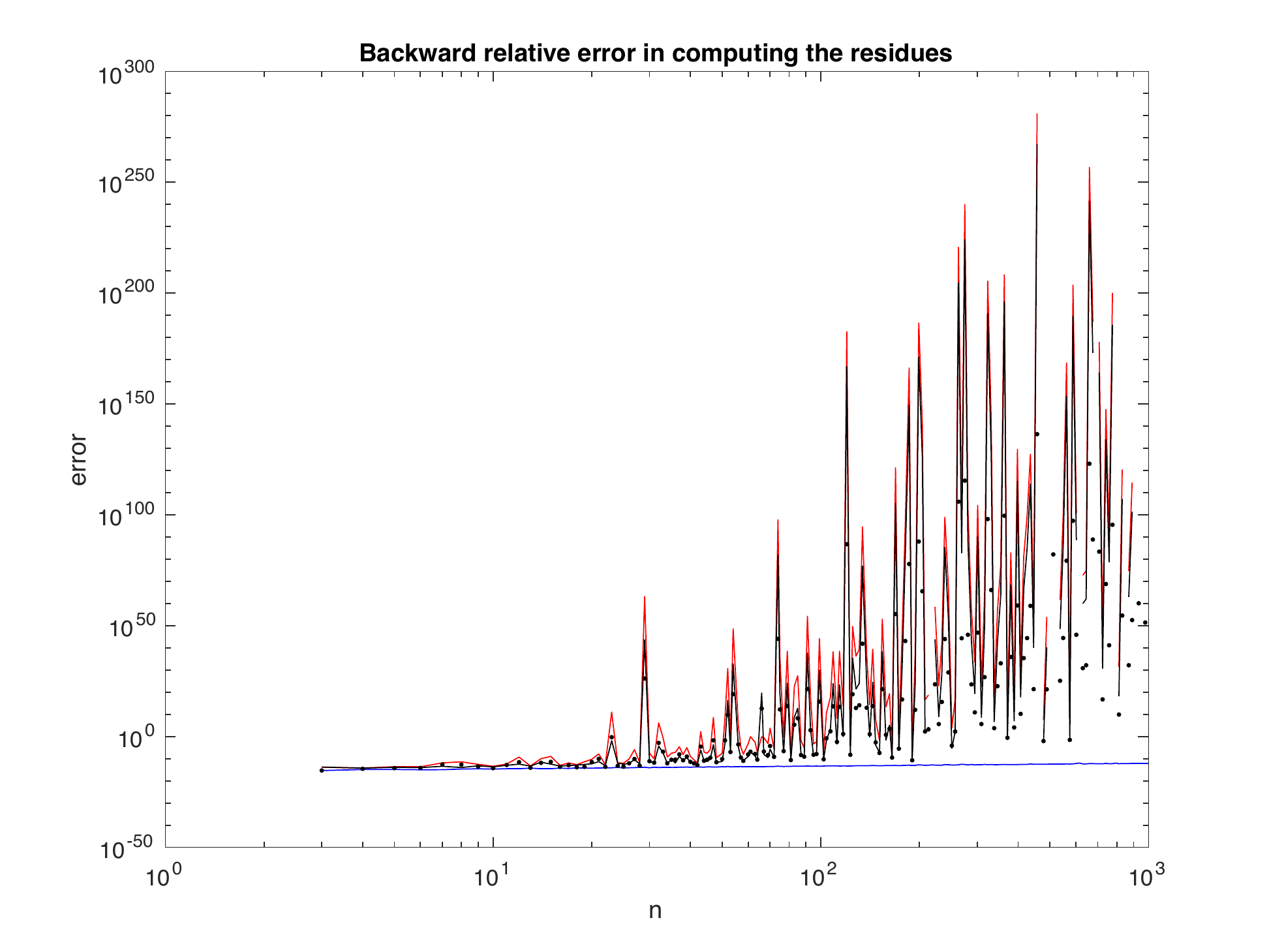}
\caption{Backward relative error of the whole procedure for complex Gaussian white noise, computed using the product \eqref{rho1} (red), the eigenvectors of the $J_n$ matrix \eqref{rho2} (black), the overdetermined Vandermonde system  \eqref{van} (blue), and the square Vandermonde system \eqref{van/2} (black dots).}\label{confr}
\end{figure} 
\section{ Conclusions}

We have presented two numerical method to calculate the poles  of Pad\'e approximations of data series and  six methods to calculate the residues. Using them, we have been able to calculate on a common desktop PC poles and zeros of noisy series up to $2\times 10^4$ data points. 
We also provided tools for checking the numerical stability of the algorithms on the given data series.
 The choice between the methods to calculate the poles appears to be not obvious, and in each case the backward and forward error should be compared.
A comparison of the different methods to calculate the residues of the poles indicates instead that eq. \eqref{van} is the  best choice for numerical implementation. 
 In the case the zeros of the Pad\'e approximation are sought instead or beside the residues of the poles, they are easily accessible by the $J_n$ matrix method.

 \section*{Acknowledgments} We would like to thank Christian Schr\"oder for many fruitful discussions and an essential help in implementing some of the numerical algorithms.

\bibliographystyle{abbrv}

\bibliography{articolo}

\end{document}